\providecommand{\R}{\mathbb{R}}
\providecommand{\wb}{\mathbb{W}}
\providecommand{\C}{\mathbb{C}}
\providecommand{\T}{\mathbb{T}}
\providecommand{\PP}{\mathbb{P}}
\providecommand{\G}{\mathbb{G}}
\providecommand{\Sp}{\mathbb{S}}
\providecommand{\J}{\mathbb{J}}
\providecommand{\vc}{\mathcal{V}}
\providecommand{\hc}{\mathcal{H}}
\providecommand{\lra}{\longrightarrow}
\providecommand{\lms}{\longmapsto}
\providecommand{\bw}{\bigwedge}
\providecommand{\w}{\wedge}
\providecommand{\wh}{\widehat} 
\providecommand{\hh}{\mathcal{H}} 
\providecommand{\ov}{\overline}
\providecommand{\zb}{\overline{z}}
\providecommand{\vol}{\mathop{Vol}\nolimits}
\providecommand{\hess}{\mathop{i d'd''}\nolimits}
 \providecommand{\SO}{\mathop{SO}}
\providecommand{\End}{\mathop{End}}
\newtheorem*{theoA}{Theorem A}\newtheorem*{theoB}{Theorem B}\newtheorem*{theoC}{Theorem C}
\newtheorem*{theoD}{Theorem D}
\begin{document}
\frontmatter
\title{Hessian of the natural Hermitian form on twistor spaces}
\author{Guillaume Deschamps}
\address{Université de Brest}
\email{Guillaume.Deschamps@univ-brest.fr}
\author{No\"el Le Du}
\address{ Université Rennes 1}
\email{noellouis.ledu@orange.fr}
\author{Christophe Mourougane}
\address{ Université Rennes 1}
\email{christophe.mourougane@univ-rennes1.fr}
\date{\today}
\keywords{twistor space  ; 4-dimensional Riemannian manifold ; quaternionic Kähler manifold ; hyperkähler manifold ; strong KT manifolds}
\subjclass{53C28 ; 53C26 ; 32Q45}

\begin{abstract}
We compute the hessian $\hess\wb$ of the natural Hermitian form~$\wb$ successively 
on the Calabi family $\T(M,g,(I,J,K))$ of a hyperk\"ahler manifold $(M,g,(I,J,K))$, 
on the twistor space $\T(M,g)$ of a $4$-dimensional anti-self-dual Riemannian manifold $(M,g)$
and on the twistor space $\T(M,g,D)$ of a quaternionic Kähler manifold $(M,g,D)$.
We show  a strong convexity property of the cycle space of twistor lines on the Calabi family $\T(M,g,(I,J,K))$ of a hyperk\"ahler manifold.
We also prove convexity properties of the $1$-cycle space of the twistor space $\T(M,g)$ of a $4$-dimensional anti-self-dual Einstein manifold $(M,g)$ of non-positive scalar curvature
and of the $1$-cycle space of the twistor space $\T(M,g,D)$ of a quaternionic Kähler manifold $(M,g,D)$ of non-positive scalar curvature.
We check that no non-K\"ahler strong KT manifold occurs as such a twistor space.
\end{abstract}

\maketitle
\setcounter{tocdepth}{4}
\tableofcontents

\mainmatter

\section{Introduction}
The twistor construction is known to provide examples of manifolds
 endowed with a natural metric $\G$ and a natural almost complex structure $\J$
that is sometimes integrable and often non-K\"ahler~(see section~\ref{def} for precise definitions).
Our aim is to compute the exterior derivative and the Hessian of the natural Hermitian form $\wb=\G(\J\cdot,\cdot)$
for different twistor constructions. 

We derive, under compactness and non-positive scalar curvature assumption for the base space,
a convexity property for the $1$-cycle space $C_1(\T)$ of the twistor space $\T$,
that could be a substitute to the well known compactness of the cycle space of compact K\"ahler manifolds.

The classical twistor construction is for anti-self-dual Riemannian $4$-manifolds.
We can here in full generality compute the Hessian of the natural Hermitian form (see theorem~\ref{theo1}). 
Under extra assumptions on the base Riemannian manifold, we can study the convexity properties of the cycle space $C_1(\T)$.
\begin{theoA}(corollary~\ref{cor:einstein-convex})
 The hessian $\hess\wb$ of the Hermitian form $\wb$ on the twistor
space $\T=\T(M,g)$ of a $4$-dimensional Einstein manifold $(M,g)$ with non-positive constant scalar curvature $s$
is non-negative. If furthermore $M$ is compact, the volume function on the $1$-cycle space $C_1(\T)$ is a continuous pluri-sub-harmonic exhaustion function.
\end{theoA}

A similar construction can be made starting with a higher dimensional Riemannian manifold with quaternionic holonomy.
A \textit{quaternionic Kähler manifold} is an oriented complete $4n$-dimensional Riemannian
manifold $(M,g)$ whose holonomy group is contained in the product $Sp(1)Sp(n)$ of quaternionic unitary groups.
Such a manifold admits a rank $3$ sub-bundle $D\subset \End(TM)$ invariant by the Levi-Civita connection of $(M,g)$, 
locally spanned by a quaternionic triple $(I,J,K=IJ=-JI)$ 
of almost complex structures $g$-orthogonal and compatible with the orientation.
One can define \textit{the twistor space $\pi~:~\T=\T(M,g,D)\to M$} as the bundle of spheres of radius $\sqrt{2}$ of $D$.

In the case of positive scalar curvature, the manifold $M$ is compact 
and Salamon (\cite[theorem 6.1]{Sal82}) showed that its twistor space admits a Kähler-Einstein metric of positive scalar curvature,
that co\"incides with the metric $\G$, up to changing the choice for the radius of vertical spheres.
In particular, $\T$ is a compact complex manifold with positive first Chern class, that is a Fano manifold.
The projection onto the vertical direction gives a contact structure~(\cite[theorem 4.3]{Sal82}).
By the Kähler property of $\T$, every component of its cycle space is compact.

In the case of negative scalar curvature, the twistor space is a complex contact uniruled manifold.
The only known compact examples are locally symmetric. 
We show in this case that the components of the $1$-cycle space are pseudo-convex. 
More precisely, we find the
\begin{theoB}(corollary~\ref{pseudo-convex})\label{theo-A}
The hessian $\hess\wb$ of the Hermitian form $\wb$ on the twistor
space $\T=\T(M,g,D)$ of a quaternionic Kähler $4n$-manifold $(M,g,D)$ with non-positive constant scalar curvature $s$
is semi-positive. If furthermore $M$ is compact, the volume function on the $1$-cycle space is a continuous pluri-sub-harmonic exhaustion function.
\end{theoB}

In the case of zero scalar curvature, the manifold $M$ is in fact locally hyperkähler. 
A \textit{hyperkähler manifold} is an oriented $4n$-dimensional Riemannian manifold $(M,g)$ 
whose holonomy group is contained in the quaternionic unitary group $Sp(n)$.
In other words, a hyperkähler manifold is an oriented $4n$-dimensional Riemannian
manifold $(M,g)$ endowed with a quaternionic triple of global $g$-Kähler complex structures $I$, $J$ and $K$ compatible with the orientation.
The corresponding pencil of complex structures $f~:~\T=\T(M,g,D)\to\PP^1$ is called the \textit{Calabi family} of $(M,g,D=(I,J,K))$.
In this case, we can relate the non-K\"ahler feature of $\T(M,g,D)$ with the Kodaira-Spencer class of the pencil $f$.
\begin{theoC}(theorem~\ref{domega-k})
Let $(\theta_1,\ldots,\theta_{4n})$ be a local orthonormal frame of $TM$. For a vertical vector $U\in\vc_{(m,u)}$,
\begin{eqnarray*}
d''\wb_{(m,u)}(U^h,\hh\theta_i^a,\hh\theta_j^a)&=&-2\Omega_u(\kappa_U(\theta_i^a),\kappa_U(\theta_j^a))
\end{eqnarray*}
where $\Omega_u$ is the holomorphic symplectic $(2,0)$-form on $X_u:=f^{-1}(u)$ 
and $\kappa_U$ is a closed $(0,1)$-form on $X_u$
with values in $TX_u$ that represents the Kodaira-Spencer class of the family $f$ at $u\in\PP^1$
in the direction $U$.
\end{theoC}
With second order derivatives, we get a precise control on the volume function for $1$-cycles of $\T$
deformations of the twistor lines
\begin{theoD}(theorem~\ref{plurif})
Let  $(M,g,D=(I,J,K))$ be a compact hyperkähler manifold.
Let  $C^0_1(\T)$ be the component of the Barlet cycle space of $\T(M,g,D)$ containing the twistor lines.
The map $\vol : C^0_1(\T)\to\R$ is a continuous pluri-sub-harmonic exhaustion function. More precisely,
\begin{eqnarray*}
\hess_{C^0_1(\T)}\vol (C_s)(\vec{n},J\ov{\vec{n}})\geq\int_{C'_s} \Vert n'\Vert ^2 d vol\geq 0
\end{eqnarray*}
where $C'_s$ is the irreducible component of the cycle $C_s$ that maps onto $\PP^1$ by the pencil map $f$.
In particular, the cycle space $C^0_1(\T)$ is pseudo-convex.
\end{theoD}

For example, starting with a compact \textit{holomorphic symplectic manifold} $(X,\Omega)$ of complex dimension~$2n$
and a Kähler class $\kappa$, by the theorem of Yau~\cite{yau} we get a Ricci flat metric $g$ with $Sp(n)$ holonomy.
The corresponding twistor space $f~:~\T(X,\Omega,\kappa)\to\PP^1$ is called \textit{the Calabi family} of $(X,\Omega,\kappa)$.
This construction and the component $C_1(\T)$ of the cycle space were used by Campana
to show that in every Calabi family one member contains a non-constant entire curve~\cite{Cam91, Cam92}.
This work has recently been pushed further by Verbitsky~\cite{Verb}, to show that every compact holomorphic symplectic 
manifold contains a non-constant entire curve, (that is, is not Kobayashi hyperbolic) and by~\cite{Verb2}
even further to show that the Kobayashi pseudo metric
 vanishes for all know examples except if their Picard number is maximal. 
We expect that the formula in theorem~\textbf{D.} could help to localise 
rational curves on compact holomorphic symplectic manifolds.

Complex Hermitian manifolds with $\hess$-closed Hermitian form are called \textit{strong K\"ahler with torsion (strong KT)}.
Constructing examples often starts with group theoretical considerations~\cite{fino-salamon, fino-review}. 
It could be expected that twistor constructions could also provide interesting examples.
We check from our computations that the vanishing of $\hess\wb$ amounts to that of $d\wb$.
Hence, no non-K\"ahler strong KT space can be constructed with this natural metric on all the considered twistor spaces.

In the text, we first deal in section~\ref{sec:1} with hyperk\"ahler manifolds,
where the pencil map $f~:~\T=\T(M,g,D)\to\PP^1$ hugely simplifies the computations.
We then turn in section~\ref{sec:2}, to the case of $4$-dimensional anti-self-dual Riemannian manifolds,
that displays all the important features of this aspect of twistor geometry.
The last section~\ref{sec:3} on quaternionic K\"ahler manifolds parallels 
the previous one under the additional Einstein assumption. 
The section~\ref{def} provides the basics on the twistor constructions.

\textit{The authors would like to thank Frédéric Campana, Benoît Claudon, Daniel Huybrechts and Misha Verbitsky for 
useful discussions and remarks.}

\section{Preliminaries on twistor constructions}
\label{def}
\subsection{Constructions on $\R^4$}

An endomorphism $u$ of the oriented Euclidean real vector space $\R^4$ is said to \textit{respect
the orientation} if for all vectors $X,Y\in\R^4$ the $4$-tuple
$(X,uX,Y,uY)$ is either linearly dependent or positively oriented. This will be denoted by $u\gg 0$.
Examples are given by the following three orthogonal anti-involutive (hence anti-symmetric) endomorphisms
$$
I=\left[\begin{array}{cccc}
0&-1&0&0\\
1&0&0&0\\
0&0&0&-1\\
0&0&1&0\end{array}\right], \ \  J=\left[\begin{array}{cccc}
0&0&-1&0\\
0&0&0&1\\
1&0&0&0\\
0&-1&0&0\end{array}\right], \ \  K=\left[\begin{array}{cccc}
0&0&0&-1\\
0&0&-1&0\\
0&1&0&0\\
1&0&0&0\end{array}\right].
$$

The set $$F:=\{u\in SO(4), u^2=-Id, u\gg0\}$$
of complex structures on $\R^4$ that respect the orientation and the Euclidean product
(called compatible complex structures)
identifies with the sphere $\{u=aI+bJ+cK/(a,b,c)\in\Sp^2\}\simeq\Sp^2$.
At a point $u\in F$, the tangent space $T_uF$ is $\{A\in so(4)/ Au+uA=0\}$
The standard metric $g_0$ on the sphere of radius $\sqrt{2}$ reads on $F$
$$
g_0(A, B)=\frac{1}{2}tr(A{}^tB)=-\frac{1}{2}tr(AB),\qquad\forall A,B\in T_uF.
$$
 As the sphere $\Sp^2$, the set $F$ inherits the complex structure of $\C P^1$.
More precisely, the complex structure of $T_uF$ reads $j\cdot A=uA$ as a matrix product.

This identification can be made intrinsic as follows.
The Euclidean product on $\R^4$ gives an Euclidean product
on the exterior product $\bigwedge^2\R^4$.
The Hodge star operator splits  $\bigwedge^2\R^4$ into $\bigwedge^2\R^4=\bigwedge^+\oplus\bigwedge^-$.
An anti-symmetric endomorphism $A$ of $so(4)$
identifies with an element $\phi(A)$ of $\bigwedge^2\R^4$ via
 $$g(\phi(A), X\wedge Y)=g(A X,Y)\qquad\forall X,Y\in \R^4.$$
For example, if $(e_j)$ is an orthonormal basis of $\R^4$, the anti-symmetric endomorphism associated with $e_k\wedge e_l$
sends $e_k$ to $e_l$ and $e_l$ to $-e_k$ and all other base vector $e_j$ to $0$.
In particular, a compatible complex structure $u$ identifies with an element $\phi(u)$
precisely of the  sphere of vectors of $\bigwedge^+$ of norm $\sqrt{2}$.
We will always identify $\phi(u)$ with $u$ and $\bigwedge^2 \R^4$ with~$so(4)$.

\subsection{Constructions on a Riemannian $4$-manifold}
Consider now a $4$-di\-men\-sional oriented Riemannian manifold $(M,g)$.
Its twistor space is the fibre bundle 
$$\xymatrix{
   \T(M,g)=\T \ar[d]^{\pi}\\
 M }$$   of
vectors of $\bigwedge^+TM=:\bw^+$ of norm $\sqrt{2}$. Fibre-wise, it identifies
with the set of compatible complex structures on the tangent space of $M$.

A natural Riemannian metric $\G$ and a natural almost-complex structure $\J$
are defined on the twistor space $\T$ as follows.
The bundle $\vc$ of vertical directions in $T\T$ is the kernel of $d\pi$.
Note that its structure group is $SO(3)\subset PGL(2,\C)$
so that fibres inherit complex structures and Riemannian metrics.
The Levi-Civita connection $\nabla^g$ of $(M,g)$ provides us
with a bundle $\hc$ of horizontal directions in $T\T$
isomorphic via $d\pi$ with $TM$, hence endowed with a complex structure and a Riemannian metric.
The natural Riemannian metric $\G$ and the natural almost-complex structure $\J$ on $\T$ are defined
so that they coïncide on the summand of the decomposition 
$$T\T=\hc\oplus\vc$$
 with the previous structures
and the decomposition is made $\G$-orthogonal and invariant by $\J$.
The map $\pi$ becomes a Riemannian submersion and its fibres are rational curve with Fubini Study metrics.
We will study the natural Hermitian form $\wb=\G(\J\cdot,\cdot)$.

We will use the notation $\hc(X)=\hc_p(X)$ to denote the horizontal lift at $p\in\pi^{-1}(m)$
 of a vector $X$ tangent to $M$ at $m$,
and likewise the notation $\hc(V)$ to denote the (orthogonal) projection of a vector $V$ tangent to $\T$
onto its horizontal part along the vertical direction.

For a real tangent vector $V$ on $\T$, we will denote by $V^h:=\frac{V-iJV}{2}, V^a:=\frac{V+iJV}{2}\in T\T_\C$
 its $(1,0)$ and $(0,1)$ parts~: $\J V^h=i V^h$, $\J V^a=-iV^a$.
Moreover, for a real tangent vector $X$ on $M$, $X^h$   will denote
$$X^h:=\pi_\star(\hh{(X)}^h)=1/2\pi_\star(\hh X-i\J\hh X)=1/2(X-iu(X))$$
and $X^a=\pi_\star(\hh{(X)}^a)$, omitting the dependence on $p=(m,u)\in \pi^{-1}(m)$.
Note that by the construction of $\J$, one has $\hc(X)^h=\hc(X^h)=:\hc X^h$ and $\hc(X)^a=\hc(X^a)=:\hc X^a$.

\subsection{Constructions on a quaternionic Kähler manifold}
Fix an integer $n\geq 1$. A \textit{quaternionic Kähler manifold} is an oriented complete $4n$-dimensional Riemannian
manifold $(M,g)$ whose holonomy group is contained in the product $Sp(1)Sp(n)$ of quaternionic unitary groups.
In other words, with the holonomy principle \cite{Bes87},
such a manifold admits a rank $3$ sub-bundle $D\subset \End(TM)$ invariant by the Levi-Civita connection of $(M,g)$, 
locally spanned by a quaternionic triple $(I,J,K=IJ=-JI)$ 
of almost complex structures $g$-orthogonal and compatible with the orientation.
We will use the notation $\nabla:=\nabla^{(g,D)}$ for the restriction to $D$ of the Levi-Civita connection,
and subsequently $R$ for the curvature of this restriction.
Berger proved that quaternionic Kähler manifolds are Einstein (\cite{berger} see also~\cite[theorem 14.39]{Bes87}).

Let $(M,g,D)$ be a  quaternionic Kähler $4n$-manifold.
One can define its \textit{twistor space $\pi~:~\T=\T(M,g,D)\to M$} as the bundle of spheres of radius $\sqrt{2}$ of $D$.
This is a locally trivial bundle over $M$ with fibre $\Sp^2$ and structure group $SO(3)$.
Using the splitting of the tangent bundle $T\T$ given by the Levi-Civita connection of $(M,g)$,
the twistor space $\T$ can be endowed with a metric $\G$ and an almost complex structure $\J$
that is integrable~(\cite[theorem 4.1]{Sal82},\cite{hklr}).

The previous remarks for horizontal lifts and decomposition in types hold.

\subsection{Constructions on a hyperkähler manifold} 
We now describe a special case of the previous construction. Fix an integer $n\geq 1$.
Recall that a \textit{hyperkähler manifold} is an oriented $4n$-dimensional Riemannian manifold $(M,g)$ 
whose holonomy group is contained in the quaternionic unitary group $Sp(n)$.
In other words, with the holonomy principle, a hyperkähler manifold is an oriented $4n$-dimensional Riemannian
manifold $(M,g)$ endowed with three global $g$-orthogonal parallel (hence integrable Kähler)
complex structures $I,J$ and $K$ compatible with the orientation such that $IJ=-JI=K$.
The corresponding pencil of complex structures $f~:~\T=\T(M,g,D)\to\PP^1$ is integrable and called the \textit{Calabi family} of $(M,g,D=(I,J,K))$.
Note that for $Sp(n)\subset SU(2n)$, each of these complex structure is Ricci-flat.

For example, starting with a compact \textit{holomorphic symplectic manifold} of complex dimension $2n$
(i.e. a compact complex Kähler manifold $X$ with a holomorphic symplectic $2$-form $\Omega$, 
hence of vanishing first Chern class) 
and a Kähler class $\kappa$, the theorem of Yau~\cite{yau} gives a unique Kähler metric $g$ in the Kähler class $\kappa$
with vanishing Ricci curvature. The form $\Omega$ is $g$-parallel by the Bochner principle,
showing that the holonomy of $g$ is contained in $U(2n)\cap Sp(2n,\C)$ that is the quaternionic unitary group $Sp(n)$,
and that $g$ is a hyperkähler metric.
The corresponding twistor space $f~:~\T(X,\Omega,\kappa)\to\PP^1$ is called \textit{the Calabi family} of $(X,\Omega,\kappa)$.
$$\xymatrix{
   \T \ar[d]^{\pi}\ar[r]^{f} &  \PP^{1}\ni u\\
m\in M\mbox{\ \  \ \ \ \ } & & }.$$ 
The Calabi family is differentiably isomorphic to the product $M\times\PP^{1}$, and the horizontal and vertical directions are given by $f$ and $\pi$.
Note in this case, that the horizontal distribution on $\T=\T(M,g,(I,J,K))$ is integrable.
In this special case, we choose the Riemannian metric $\G$ on $\T$ to be the product metric of $g$ with the spherical metric of radius $1$.

The manifold $X$ will be called \textit{irreducible holomorphic symplectic} if furthermore
$X$ is simply connected and $H^0(X,\Omega^2_X)$ is generated by the holomorphic symplectic $2$-form $\Omega$.

\section{Calabi families of hyperkähler manifolds}
 \label{sec:1}

We will work in this section on a hyperkähler manifold $(M,g,(I,J,K))$. 
We will  assume that the holonomy group is exactly $Sp(n)$ 
so that each $X_u:=f^{-1}(u)$ is an irreducible holomorphic symplectic manifold~\cite{beauville}.

\subsection{Computations of $d\wb$ and $d''\wb$}
We choose a complex coordinate $\zeta$ centred at the point parametrising the complex structure $I$.
The stereographic projection tell us that the complex structure $\J$ on
$T\T\simeq \pi^\star TM\oplus f^\star T\PP^1$ is given by
$$\J_{(m,\zeta)}=\left( \frac{1-|\zeta |^2}{1+|\zeta|^2}I_m
+\frac{i(\zeta -\ov\zeta)}{1+|\zeta|^2}J_m +\frac{\zeta
+\ov\zeta}{1+|\zeta|^2}K_m,i \right).$$ 
We choose local holomorphic coordinates $(z_1,\cdots z_n)$ on the complex manifold $(M,I)$.
Then, the forms $d\varphi_{2i-1}:=dz_{2i-1}-\zeta d\zb_{2i}$ and $d\varphi_{2i}:=dz_{2i}+\zeta d\zb_{2i-1}$ 
together with $d\zeta$
built a basis of the space of forms of type $(1,0)$ on $T_{(m,\zeta)}\T\otimes\C$.

This lead to the following description of the Hermitian form $\wb$ on $\T$
in terms of the closed form $w_I, w_J, w_K$ associated with the Kähler structures $I, J, K$
respectively. We choose $w_{\PP^1}$ to be $w_{\PP^1}=2\frac{id\zeta\wedge d\overline\zeta}{(1+\vert\zeta\vert^2)^2}$
of volume $4\pi$ as the sphere of radius $1$.

$$\wb_{(m,\zeta)}=\left( \frac{1-|\zeta |^2}{1+|\zeta|^2}w_I
+\frac{i(\zeta -\ov\zeta)}{1+|\zeta|^2}w_J +\frac{\zeta
+\ov\zeta}{1+|\zeta|^2}w_K,w_{\PP^1} \right).$$ 

We first compute its exterior derivative
 \begin{eqnarray*}
 d\wb&=&\frac{1}{(1+\vert\zeta\vert^2)^2}\Big(-2\overline\zeta
 w_I+i(1+\overline\zeta^2)w_J+(1-\overline\zeta^2)w_K\Big)\wedge d\zeta\\
 &&+
 \frac{1}{(1+\vert\zeta\vert^2)^2}\Big(-2\zeta
 w_I-i(1+\zeta^2)w_J+(1-\zeta^2)w_K\Big)\wedge d\overline\zeta.
 \end{eqnarray*}
 To extract its $(1,2)$ part, simply check 
from 
\begin{eqnarray*}
 w_I=\frac{i}{2}\sum_{j=1}^{n/2} dz_{2j-1}\wedge d\zb_{2j-1}+ dz_{2j}\wedge d\zb_{2j}\\
 w_J=\frac{1}{2}\sum_{j=1}^{n/2} dz_{2j-1}\wedge dz_{2j}    + d\zb_{2j-1}\wedge d\zb_{2j}\\
 w_K=\frac{-i}{2}\sum_{j=1}^{n/2} dz_{2j-1}\wedge dz_{2j}    - d\zb_{2j-1}\wedge d\zb_{2j}
\end{eqnarray*}
that
$$
d\wb=\frac{i}{(1+\vert\zeta\vert^2)^2}\left(\sum_{j}d\overline\varphi_{2j-1}\w d\overline\varphi_{2j}\w d\zeta
-\sum_{j}d\varphi_{2j-1}\w d\varphi_{2j}\w d\overline\zeta\right).
$$
Hence,
$$ \label{d``w} d''\wb=\frac{1}{(1+\vert\zeta\vert^2)^2}
\Big(-2\overline\zeta w_I+i(1+\overline\zeta^2)w_J+(1-\overline\zeta^2)w_K\Big)\wedge d\zeta.$$

\subsection{Kodaira-Spencer map and K\"ahler property}

The main drawback with the use of twistor spaces is that they are almost never of Kähler type, 
even under strong vanishing assumptions for the curvature of~$g$.
This defect can be quantified, at least for the natural metric on the twistor space $\T(M,g,(I,J,K))$, 
by the Kodaira-Spencer class.
 We now prove an intrinsic version of the previous formula~\ref{d``w}
\begin{theo}\label{domega-k}
Let $(\theta_1,\ldots,\theta_{4n})$ be a local orthonormal frame of $TM$.
 The exterior derivative $d\wb$ of the Hermitian form $\wb$ on the twistor space of a hyperkähler manifold
$(M,g,(I,J,K))$  vanishes on pure directional vectors
except when evaluated on two horizontal vectors and one vertical vector.
More precisely then, for a vertical vector $U\in\vc_{(m,u)}$,
\begin{eqnarray*}
d''\wb_{(m,u)}(U^h,\hh\theta_i^a,\hh\theta_j^a)&=&-2\Omega_u(\kappa_U(\theta_i^a),\kappa_U(\theta_j^a))
\end{eqnarray*}
where $\Omega_u$ is the holomorphic symplectic $(2,0)$-form on $X_u:=f^{-1}(u)$ 
and $\kappa_U$ is a closed $(0,1)$-form on $X_u$
with values in $TX_u$ that represents the Kodaira-Spencer class of the family $f$ at $u\in\PP^1$
in the direction $U$.
\end{theo}

\begin{proof}
  We first follow~\cite[proposition 25.7]{huybrechts}.
Consider  a path $\gamma(t)$ in the base $\PP^1$ starting at $u=I$ say, with derivative $U\in T\PP^1$. 
Over every point $m\in M$, there is a vertical lift that we may write as
$u_m(t)=I_m+t U_m+t^2\cdots$. Note that $U_m$ is the derivative in the direction $U$,
$\varphi_\star (U)\in so(T_mM)$,
 of the map $\varphi : \pi^{-1}(m)\to \SO (T_mM)$
that encodes the variation of complex structure on $T_mM$.
For small $t$, we write $T^{0,1}_mX_{u_m(t)}$ as the graph of a map $K(t)=t\kappa_U+t^2\cdots$
 from $T_m^{0,1}X_u$ to $T_m^{1,0}X_u$. Note that $\kappa_U$ seen as a $(0,1)$-form on $X_u$
 with values on $T^{1,0}X_u$ is closed 
by the integrability of $\T$ and has, as cohomology class, the Kodaira-Spencer class 
$\{\kappa_U\}\in H^1(TX_u)$.
For a vector $v\in T_m^{0,1}X_u$, we have the relation
$u_m(t)(v+K(t)(v))=-i(v+K(t)(v))$ whose first order term gives 
$I_m(\kappa_U(v))+U_m(v)=i\kappa_U(v)+U_m(v)=-i\kappa_U(v)$.
This shows that 
\begin{eqnarray*}\label{phi}\varphi_\star (U)(v)=U_m(v)=-2i\kappa_U(v).\end{eqnarray*}
Now, note that, because the horizontal and the vertical distributions are integrable and horizontal lifts commutes with vertical lifts,
the brackets occurring in the following computations vanish
\begin{eqnarray*}
d\wb(U,\hh\theta_i,\hh\theta_j)&=& U\cdot\wb(\hh\theta_i,\hh\theta_j)\\&&-
\wb\big([\hh\theta_i,\hh\theta_j],U\big)-\wb([U,\hh\theta_i],\hh\theta_j)+\wb([U,\hh\theta_j],\hh\theta_i)\\
&=&U\cdot g(u(\theta_i),\theta_j)
\end{eqnarray*}
so that
\begin{eqnarray*}
 d''\wb(U^h,\hh\theta_i^a,\hh\theta_j^a)
&=&g(\varphi_\star (U)(\theta_i^a),\theta_j^a)
=-2ig(\kappa_U (\theta_i^a),\theta_j^a)\\
&=&-2\omega_u(\kappa_U(\theta_i^a),\theta_j^a)=-2\Omega_u(\kappa_U(\theta_i^a),\kappa_U(\theta_j^a)).
\end{eqnarray*} 
The last equality is proved in~\cite{huybrechts}.
\end{proof}

\subsection{Computations of $\hess\wb$}
We recall a formula for the hessian of the Hermitian form~$\wb$ (\cite[section 8.4]{KV}).
Our computations follow from the expression of the $(1,2)$-part of the exterior derivative $d\wb$.
\begin{eqnarray*}
 idd''\wb&=&\frac{i}{(1+\vert\zeta\vert^2)^2}
\Big(-2d\overline\zeta w_I+2i\overline\zeta d\overline\zeta w_J-2\overline\zeta d\overline\zeta w_K\Big)\wedge d\zeta\\
&&-2\frac{i\zeta}{(1+\vert\zeta\vert^2)^3}\Big(-2\overline\zeta w_I+i(1+\overline\zeta^2)w_J+(1-\overline\zeta^2)w_K\Big)d\overline\zeta\wedge d\zeta
=\wb\w w_{\PP^1}
\end{eqnarray*}
 We get
\begin{theo}\label{theo2}
The hessian $\hess\wb$ of the Hermitian form of the Calabi family $\T(M,g,(I,J,K))$ of a hyperk\"ahler manifold $(M,g,(I,J,K))$
is given by
$$\hess \wb=\wb\w w_{\PP^1}.$$
\end{theo}

\subsection{Pseudo convexity of the cycle space in the case of $\R^4$}
We first study, as an example, the small deformations in a locally conformally flat situation.

The twistor space $\T(\R^4)$ of the flat Euclidean $\R^4$ is
described as a complex manifold as the total space of the rank two
vector bundle $\mathcal O(1)\oplus \mathcal O(1)$ on $\PP^1$. 
The differentiable product structure is given by the map
$$
\begin{array}{clc}
\mathcal O(1)\oplus \mathcal O(1)&\stackrel{\psi}{\lra}& \C^2\times\PP^1\\
(a\zeta+b,c\zeta+d)&\lms&(z_1,z_2,\zeta)
\end{array}
\textrm{ with }\left\{\begin{array}{l} 
\displaystyle z_1=\frac{\ov c \zeta+\ov d\vert\zeta\vert^2+a+b\zeta}{1+\vert\zeta\vert^2}\\
\displaystyle z_2=\frac{-\ov a\zeta-\ov b\vert \zeta\vert^2 +c+d\zeta}{1+\vert\zeta\vert^2}.
\end{array}\right.
$$
Twistor fibres are given by $z_1=constant$ and $z_2=constant$, that is $c=-\ov b$  and
$d=\ov a$.
The cycle space $C_1(\T(\R^4))$ is simply the vector space $H^0(\PP^1,\mathcal
O(1)\oplus\mathcal O(1))$ of holomorphic sections.
Irreducible cycles are parametrised in the form  $(a\zeta+b,c\zeta+d)$.
The volume function that can be computed as
$$\vol (\PP^1)\Big(1+\frac{\vert a-\ov d\vert^2+\vert \ov b+c\vert^2}{4}\Big)$$ 
achieves its minimum for twistor lines. To compute the Hessian of the Hermitian form, 
we consider a point $s\in C_1(\T(\R^4))$, 
where $C_s$ is parametrised by $(a\zeta+b,c\zeta+d)$. 
We consider a non-zero tangent vector
$n=(\alpha\zeta+\beta,\gamma\zeta+\delta)\in H^0(\PP^1,\mathcal O(1)\oplus\mathcal O(1))\simeq T_{s} C_1(\T(\R^4))$, 
whose norm is the flat Hermitian norm on $\C^2$ (for which $\mid\!\mid\frac{\partial}{\partial z_i}\mid\!\mid^2=\frac{1}{2}$) of $\psi_\star(n)$. 
Then theorem~\ref{theo2} gives :
$$
\hess\vol_{C_1(\T)}(C_s)(\vec{n},\ov{\vec{n}})=  \vol(\PP^1)
\frac{\vert\alpha\vert^2+\vert\beta\vert^2+\vert\gamma\vert^2+\vert\delta\vert^2}{4}>0
$$
which is coherent with the former expression.

From the description of the twistor space of the conformally flat $4$-sphere $\Sp^4=\R^4\cup\{\infty\}$
as $\PP^3$ and of the cycle space as the grassmannian of lines in $\PP^3$, we recover this strict pseudo-convexity
by the ampleness property of the Schubert divisor parametrising lines meeting the twistor line $\pi^{-1}(\infty)$.

\subsection{Convexity of the $1$-cycle space}
Let $(M,g, (I,J,K))$ be a hyperkähler manifold and $\T=\T((M,g,(I,J,K))\stackrel{f}{\to}\PP^1$ its Calabi family.
Let  $C^0_1(\T)$ be the component of the Barlet cycle space of $\T$ containing the twistor lines.

For every $s=[C_s]\in C^0_1(\T)$, we will identify a tangent vector $\vec{n}\in T_s C^0_1(\T)$ with a section $n$ of the normal sheaf
$N_{C_s/\T}$ of the $1$-cycle $C_s$ in the twistor space $\T$.
The intersection number of a cycle $C_s$ in $C^0_1(T)$ with a fibre of $f$ being constantly $1$, 
we infer that every member $C_s$ contains, outside an irreducible section $C'_s$ of the pencil $f$, 
a finite number $\sum H_j$ of horizontal rational curves.
The component $C'_s$ being a section of $f$, there is an horizontal lifting $\tilde{n}$ 
of the normal section $n$, whose norm is simply denoted by $\Vert n\Vert$. 
\begin{theo}\label{plurif}
The map $\vol : C^0_1(\T)\to\R$ is a continuous pluri-sub-harmonic exhaustion function. 
In particular, the cycle space $C^0_1(\T)$ is pseudo-convex.
More precisely,
\begin{eqnarray}
 \label{in}\hess_{C^0_1(\T)}\vol (C_s)(\vec{n},J\ov{\vec{n}})\geq\int_{C'_s} \Vert n'\Vert ^2 d vol\geq 0
\end{eqnarray}
where $C'_s$ is the irreducible component of the cycle $C_s$ that maps onto $\PP^1$ by the pencil map $f$.
\end{theo}

\begin{proof}
The volume function is gotten by integration of the Hermitian form $\wb$ on the smooth part of the cycles.
It is well-defined by a theorem of Lelong~\cite{lelong}.

By definition, a continuous function $\phi$ on an analytic space $Y$ is \textit{pluri-subharmonic}
if  every point of $Y$ has a neighbourhood $V$ that embeds in a complex ball $B$
where the function $\phi$  can be extended as a continuous pluri-sub-harmonic function.
By a theorem of Fornaess-Narasimhan~\cite{fo-na}, the map $\phi$ is pluri-sub-harmonic on $Y$
if and only if for every holomorphic maps $j:\Delta\to Y$ from the unit disc to $Y$,
the function $\phi\circ j$ is pluri-sub-harmonic.

Choose a cycle $C_0$, a tangent vector $\vec{n}\in T_{[C_0]} C^0_1(\T)$, and a family 
$$\xymatrix{
 \mathcal{C}   \ar[d]^{\Pi}\ar[r]^{\Gamma} & \T\\
0\in \Delta\ \ \ \ \ & & }$$
of cycles with this tangent vector $\vec{n}$ at the origin. 
Then, 
\begin{eqnarray*}
 \hess_{C^0_1(\T)}\vol(C_s)(\vec{n},J\ov{\vec{n}})
&=&\hess \Pi_\star\Gamma^\star \wb(\vec{n},J\ov{\vec{n}})
=\Pi_\star\Gamma^\star \hess\wb(\vec{n},J\ov{\vec{n}})\geq 0.
\end{eqnarray*}
by theorem~\ref{theo2}

For the irreducible image $C'_s$ of a section $\sigma~:\PP^1\to\T$ of the pencil $f$,
\begin{eqnarray*}
 \int_{C'_s}\Gamma^\star \hess\wb(\vec{n},J\ov{\vec{n}})
&=&\int_{\PP^1}\sigma^\star\hess\wb(\tilde{n},\J\ov{\tilde{n}})\\
&=&\int_{\C}\hess\wb(\sigma_\star\frac{\partial}{\partial \zeta},
\J\ov{\sigma_\star\frac{\partial}{\partial
\zeta}},\tilde{n},\J\ov{\tilde{n}})d\lambda_\C(\zeta)
\end{eqnarray*}
 where $\zeta$ is a complex coordinate on $\PP^1$
and $\tilde{n}$ is any lifting of $n$ under $T\T_{\mid C_s}\to N_{C_s/\T}$. 
For $C'_s$ is a section of the map $f$, 
the composed map $\hc_{\mid C'_s}\hookrightarrow
T\T_{\mid C'_s}\to  N_{C'_s/\T}$ is an isomorphism and we can assume that the lifting $\tilde{n}$ lies
in  $\hc_{\mid C'_s}$. Hence, by theorem~\ref{theo2} only
the vertical part $\frac{\partial}{\partial \zeta}$ of
$\sigma_\star\frac{\partial}{\partial \zeta}
=\sum_{i=1}^{4n}\frac{\partial}{\partial
\zeta}\sigma_i(\zeta)\frac{\partial}{\partial x_i}
\oplus\frac{\partial}{\partial \zeta}$ contributes : 
\begin{eqnarray*}
 \int_{C'_s}\Gamma^\star \hess\wb(\vec{n},J\ov{\vec{n}})
&=&\int_\C \hess\wb_{\sigma(\zeta)}(\frac{\partial}{\partial\zeta}, 
\J\ov{\frac{\partial}{\partial\zeta}},\tilde{n},\J\ov{\tilde{n}})d\lambda_\C(\zeta).
\end{eqnarray*}
 Theorem~\ref{theo2} gives
\begin{eqnarray*}
 \hess\wb_{\sigma(\zeta)}(\frac{\partial}{\partial\zeta},\J\ov{\frac{\partial}{\partial \zeta}},
\tilde{n},\J\ov{\tilde{n}}) d\lambda_\C(\zeta) 
&=&\| n\|^2_\hc
d\lambda_{\PP^1}(\zeta).
\end{eqnarray*}

As for the horizontal part $ H_j$, using a parametrisation by $\PP^1$, we get
\begin{eqnarray*}
\int_{H_j} \Gamma^\star\hess\vol(\vec{n},J\ov{\vec{n}})
&=&\int_{\PP^1} \hess\wb(h,\J\ov{h},\tilde{n},\J\ov{\tilde{n}})
\end{eqnarray*}
where $h$ is horizontal and 
where $\tilde{n}$ is any lifting of $n$ under $T\T_{\mid C_s}\to N_{C_s/\T}$. 
By the Kähler property of the fibres of $f$ or by theorem~\ref{theo2}, 
only the vertical part of the lifting is relevant,
and this contributes non-negatively to the hessian.

The map $\vol$ being a continuous  exhaustion~\cite{lieb},
we infer from its pluri-sub-harmonicity, that the cycle space $C^0_1(\T)$ is pseudo-convex.
\end{proof}

\begin{rema}
The inequality~(\ref{in}) displays the fact that, because a non zero tangent vector $\vec{n}\in T_0 C^0_1(\T)$ 
can have zero component $n'$ on the slanted component $C'_0$,
there can be compact families of horizontal
$1$-cycles, as for example, in the Hilbert scheme $Hilb^n(S)\supset Hilb^n(C)=\PP^n$ of a $K3$ surface 
that contains a smooth rational curve $C$. 
Nevertheless, in the case of $K3$ surfaces ($n=1$), Verbitsky~\cite{Verb3} proved that the component $C^0_1(\T)$
is in fact Stein.
\end{rema}


\section{Twistor spaces of $4$-dimensional anti-self dual Riemannian  manifolds}
\label{sec:2}
We consider in this whole section a $4$-dimensional anti-self dual Riemannian manifold $(M,g)$.
Let $\nabla^g$ be the Levi-Civita connection of $(M,g)$,
$\eta$ its connection $1$-form in a given frame with values in $so(TM)$
and $R$ its curvature tensor defined by
$R(X,Y)Z:=[\nabla^g_Y,\nabla^g_X]Z +\nabla^g_{[X,Y]}Z$. Recall that, with these conventions
 $R(X,Y)= -(d\eta+\eta\w\eta)(X,Y)$.
As an endomorphism of $\bigwedge^2TM=\bw^+\oplus\bw^-$ its decomposition is
$$
R=\left[\begin{array}{cc} W^+ +\frac{s}{12}Id&B\\
^tB&W^- +\frac{s}{12}Id
\end{array}\right]
$$
Here $B:\left\{\begin{array}{l}\bw^+\to\bw^-\\ \bw^-\to\bw^+\end{array}\right.$
is the trace-free Ricci tensor. It vanishes for Einstein's metrics.
The operator $W=W^++W^-$, called the Weyl operator, depends only on the conformal class of the Riemannian metric $g$ 
and  $s$ is the scalar curvature of $g$.
By a fundamental theorem of~\cite{AHS78} the almost complex structure $\J$
is integrable if and only if the metric $g$ on $M$ is anti-self-dual, that is $W^+=0$.
We will always assume this.
By the works of Trudinger, Aubin, and Schoen~\cite{schoen} on Yamabe problem, when $M$ is compact, we will always choose
a conformal representative of $g$ with constant scalar curvature.
This does not change the isomorphism class of $(\T,\J)$.

\subsection{Properties of type and directional decompositions}

\begin{lemm}\label{type}
 Given a positively oriented orthonormal frame $(\theta_1,\ldots,\theta_4)$ on an open set $\mathcal U$ of
$M$
 \begin{enumerate}
\item for all $(\alpha,\beta)$ in $\bw^+_\C\times\bw^-_\C$, the matrix bracket $[\alpha,\beta]$ (in fact
$[\phi^{-1}(\alpha),\phi^{-1}(\beta)])$ vanishes. 
\item
$\theta_j^h\w\theta_k^h\in\bw^+_\C$ and
$\theta_j^a\w\theta_k^a\in\bw^+_\C$ 
\item $\theta_i^h\w\theta_j^a\in\bw^-_\C\oplus Vect(\phi(u))_\C)$.
\end{enumerate}
\end{lemm}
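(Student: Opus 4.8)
The plan is to treat all three assertions pointwise on the fibre $\bw^2 T_mM$, which I identify with $\bw^2\R^4$ via the frame $(\theta_1,\dots,\theta_4)$. Since the matrix bracket, the Hodge splitting and the type decomposition induced by $u$ are all $SO(4)$-equivariant, and $SO(4)$ acts transitively on the sphere $F$ of compatible complex structures, I may rotate the frame whenever convenient so as to put the relevant $u$ in the standard position $u=I$.

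For (1) I would use that $\bw^2\R^4=\bw^+\oplus\bw^-$ is the decomposition of $so(4)\cong su(2)\oplus su(2)$ into its two commuting simple ideals. Concretely $\{I,J,K\}$ is a basis of $\bw^+$, and taking any basis $\{I',J',K'\}$ of $\bw^-$ one checks that the nine brackets $[I,I'],[I,J'],\dots,[K,K']$ vanish by direct matrix multiplication; bilinearity then gives $[\phi^{-1}(u),\phi^{-1}(v)]=0$ for all $u\in\bw^+_\C$ and $v\in\bw^-_\C$.

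For (2) and (3) the efficient tool is the refinement of the Hodge splitting by the complex structure $u$, valid in real dimension $4$:
\begin{equation*}
\bw^+_\C=\bw^{2,0}_u\oplus\C\,\phi(u)\oplus\bw^{0,2}_u,\qquad\bw^-_\C=\big(\bw^{1,1}_u\big)_0,
\end{equation*}
that is, the self-dual forms are those of type $(2,0)+(0,2)$ together with the line spanned by the Kähler form $\phi(u)$, whereas the anti-self-dual forms are exactly the primitive $(1,1)$-forms. Granting this, the three membership statements are immediate from type considerations: $\theta_i^h$ is of type $(1,0)$ and $\theta_j^a$ of type $(0,1)$ for $u$, so $\theta_i^h\wedge\theta_j^h$ is of type $(2,0)$ and $\theta_i^a\wedge\theta_j^a$ of type $(0,2)$, whence both lie in $\bw^+_\C$; while $\theta_i^h\wedge\theta_j^a$ is of type $(1,1)$ and therefore lies in $\C\,\phi(u)\oplus\big(\bw^{1,1}_u\big)_0=\Vect(u)_\C\oplus\bw^-_\C$.

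The one genuine computation — and the step I expect to carry the weight of the proof — is the displayed decomposition, which I would verify with $u=I$. The generator of $\bw^{2,0}_u$ is
\begin{equation*}
\theta_1^h\wedge\theta_3^h=\tfrac{1}{4}(\theta_1-i\theta_2)\wedge(\theta_3-i\theta_4)=\tfrac{1}{4}\big[(\theta_1\wedge\theta_3-\theta_2\wedge\theta_4)-i(\theta_1\wedge\theta_4+\theta_2\wedge\theta_3)\big],
\end{equation*}
which the Hodge star fixes, using $\star(\theta_i\wedge\theta_j)=\theta_k\wedge\theta_l$ for $(ijkl)$ an even permutation of $(1234)$; hence $\bw^{2,0}_u\oplus\bw^{0,2}_u\subset\bw^+_\C$. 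The Kähler form $\phi(u)=\theta_1\wedge\theta_2+\theta_3\wedge\theta_4$ is self-dual as well, and since $\dim_\C\bw^+_\C=3=1+1+1$ these three pieces exhaust $\bw^+_\C$; the space $\bw^-_\C$, being the orthogonal complement of $\bw^+_\C$, is then forced to coincide with the primitive $(1,1)$-forms $\big(\bw^{1,1}_u\big)_0$, completing the decomposition.
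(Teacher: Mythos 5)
Your argument is correct, and for items (2)--(3) it takes a genuinely different route from the paper. For item (1) the two proofs coincide in substance: the paper phrases the commutation as associativity of left and right quaternion multiplication (plus the explicit bases of $\bw^{\pm}$), which is exactly your statement that $\bw^{+}$ and $\bw^{-}$ are the two commuting ideals in $so(4)\cong su(2)\oplus su(2)$; your nine basis brackets are the same check. For (2) and (3), however, the paper simply expands $\theta_i^h\w\theta_j^h=\frac14(Id-iu)(\theta_i\w\theta_j-u\theta_i\w u\theta_j)$ and $\theta_i^h\w\theta_j^a$ in the frame and verifies membership against the explicit bases of $\bw^{\pm}$, whereas you first establish the classical refinement $\bw^+_\C=\bw^{2,0}_u\oplus\C\,\phi(u)\oplus\bw^{0,2}_u$ and $\bw^-_\C=(\bw^{1,1}_u)_0$ and then conclude by pure type bookkeeping. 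Your route concentrates all the computation in one reusable structural statement (which, as a bonus, makes transparent the remark following the lemma that only the $\bw^-_\C$-component of $\theta_i^h\w\theta_j^a$ survives under $B$, and why $\wh{\theta_i^h\w\theta_j^a}$ reduces to a multiple of $\wh{u}=0$); the reduction to $u=I$ by $SO(4)$-equivariance is legitimate since all three subspaces involved are frame-independent. The paper's expansion is more elementary and self-contained but yields less insight per computation. Both are complete proofs.
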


\begin{proof}
 \begin{enumerate}
\item Any endomorphism $A\in\phi^{-1}(\bigwedge^+)\subset so(4)$ coming from a bivector of norm $\sqrt{2}$ can be described
as the left multiplication by a quaternion with a quaternion $q$ as the quaternion product $AX=q\cdot X$, 
and likewise any $B\in\phi^{-1}(\bigwedge^-)$ coming from a bivector of norm $\sqrt{2}$ can be described
as the right multiplication by a quaternion. The result now follows from the associativity
of the quaternion algebra.
 More explicitly note that the family  $ \left\{\begin{array}{l}
\,\theta_1\w\theta_2+\theta_3\w\theta_4\\
\,\theta_1\w\theta_3-\theta_2\w\theta_4\\
\,\theta_1\w\theta_4+\theta_2\w\theta_3
\end{array}
\right.$ is a basis of $\bigwedge^+$ and that
$\left\{\begin{array}{l}
\,\theta_1\w\theta_2-\theta_3\w\theta_4\\
\,\theta_1\w\theta_3+\theta_2\w\theta_4\\
\,\theta_1\w\theta_4-\theta_2\w\theta_3
\end{array}
\right. $ is a basis of $\bigwedge^-$.

\item At a point $p=(m,u)$, expanding we get
$$\begin{array}{lll}\theta_j^h\wedge\theta_k^h
&=&\displaystyle \frac{1}{4}(\theta_j-iu\theta_j)\wedge(\theta_k-iu\theta_k)\\
&=&\displaystyle \frac{1}{4}
\Big(\theta_j\wedge\theta_k-u\theta_j\wedge u\theta_k
-i(\theta_j\wedge u\theta_k+u\theta_j\wedge\theta_k)\Big)\\
 &=&\displaystyle \frac{1}{4}(Id-iu)(\theta_j\wedge\theta_k
-u\theta_j\wedge u\theta_k)\in\bigwedge\nolimits^+_\C.
 \end{array}$$
 The relation
 $\theta_i^a\wedge\theta_j^a\in\bw^+_\C$ follows by conjugation.

 \item Expanding again, we get
 $$\begin{array}{lll}\theta_j^h\wedge\theta_k^a&=&\displaystyle \frac{1}{4}
\Big(\theta_j\wedge\theta_k+u\theta_j\wedge u\theta_k
+i(\theta_j\wedge u\theta_k-u\theta_j\wedge\theta_k)\Big).
 \end{array}$$
We then check $\left\{\begin{array}{c}\theta_j\w\theta_k+
u\theta_j\w u\theta_k\in\bw^-_\C\oplus Vect(\phi(u))\\
\theta_j\w u\theta_k- u\theta_j\w \theta_k\in\bw^-_\C\oplus Vect(\phi(u))\end{array}\right.$
\end{enumerate}
\end{proof}

The data of a positively oriented orthonormal frame $(\theta_1, \ldots, \theta_4)$ 
on an open set $\mathcal U$ of $M$ defines a trivialisation 
$\T\supset\pi^{-1}(\mathcal U)\simeq\mathcal U\times\Sp^2$. 
The local coordinates of a point $p$ in $\T$ will be denoted by  $(m,u)$.
Because the fibre of $\pi$ over a point $m\in M$ is
$\{u\in SO(T_mM)/u^2=-Id\textrm{ and } u\gg 0\}$,
the vertical space $\vc_p$ at a point $p=(m,u)$ is given by
$$\vc_p=\{A\in so(T_mM)/Au+uA=0\}.$$
Let  $A:\mathcal{U}\to so(TM)$ be a section of the bundle of anti-symmetric endomorphisms.
We define $\wh{A} : \pi^{-1}(\mathcal{U})\to T\T$ to be the associated vertical vector field
computed with matrix brackets
$$\wh{A}(p)=\wh{A}(m,u)=[u,A(m)]\in \vc_p.$$
Note that these special vectors generate the vertical directions.
\begin{rema} The first easy property of lemma~\ref{type} will hugely simplify the forthcoming computations.
For example, if $A:\mathcal{U}\to so(TM)$ is a section and $A^+ : \mathcal{U}\to\bw^+$
its projection onto  $\bw^+$ then the associated vertical vector fields are equal
$\wh{A}=\wh{A^+}$. In particular, $\wh{R(\theta_i^h\w\theta_j^h)}=
\wh{(W^++\frac{s}{12}Id)(\theta_i^h\w\theta_j^h)}$.
Similarly, because $B$ maps $\bw^+$ to $\bw^-$, the vertical vector field $\wh{B(u)}$ vanishes.
Hence, for the vertical vector field $\wh{B(\theta_i^h\w\theta_j^a)}$, only the component in
$\bigwedge^-_\C$ of the vector $\theta_i^h\w\theta_j^a\in\bigwedge^-_\C\oplus Vect (u)_\C$ is relevant.
\end{rema}

Let $X : \mathcal{U}\to TM$ be a vector field on $\mathcal{U}$.
Its horizontal lifting $\hh(X)$ is a basic vector field
(i.e. $\pi_\star\hh{(X)}=X$ everywhere on $\pi^{-1}(\mathcal{U}$)).
In terms of the local trivialisation $\pi^{-1}(\mathcal U)\simeq\mathcal U\times\Sp^2$, 
the principal bundle $P_{so(4)}$ of positively oriented
orthonormal frames of $TM$  maps onto $\T$ by
\begin{eqnarray*}
P_{so(4)}&\to&\T\\ (m,(\theta_1, \ldots, \theta_4))&\mapsto&\left\{ \begin{array}{c}(m,u) \textrm{ such that }\\
Mat(u,(\theta_1, \ldots, \theta_4))=I=\left[\begin{array}{cccc}
0&-1&0&0\\
1&0&0&0\\
0&0&0&-1\\
0&0&1&0\end{array}\right]\end{array}\right..
\end{eqnarray*}
We infer that the horizontal lifted vector field $\hh{(X)}$ reads
$$
T\T\supset \hc_p\ni\hh{(X)}=X+\wh{\eta(X)}\in T\mathcal{U}\oplus T\Sp^2.
$$

\subsection{Bracket computations}
The following bracket computations of basic vector fields will be used again and again.
We first discuss according to vertical and horizontal directions.
\begin{lemm}\label{bracket}
 Given a positively oriented orthonormal frame $(\theta_1,\ldots,\theta_4)$ on an open set $\mathcal U$ of
$M$ and two sections $A$ and $B$ of $\mathcal{U}\to so(TM)$, 
the Lie brackets of the associated vector fields are computed by
$$\begin{array}{ccccc}
[\wh A,\wh B] &=&&& \wh{[A,B]}\cr
[ \hh{(\theta_i)}, \wh{A} ] &=&&&\wh{(\nabla^g_{\theta_i}A)}+\wh{ [\eta(\theta_i),A] } \cr
[\hh{(\theta_i)},\hh{(\theta_j)}] &=&\hh{[\theta_i,\theta_j]}&-& \wh{R(\theta_i\w\theta_j)}.
\end{array}
$$
\end{lemm}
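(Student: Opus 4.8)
The plan is to run all three computations inside the local trivialisation $\pi^{-1}(\mathcal U)\simeq\mathcal U\times\Sp^2$, using the explicit descriptions $\wh A(m,u)=[u,A(m)]$ and $\hh(\theta_i)=\theta_i+\wh{\eta(\theta_i)}$, the only structural inputs being the Jacobi identity in $so(TM)$, the structure equation for $\eta$, and the curvature convention $R=-(d\eta+\eta\w\eta)$ recorded above. For the first bracket I would observe that $\wh A$ and $\wh B$ are both tangent to the fibres, so their bracket is again vertical and the base dependence of $A(m),B(m)$ is never differentiated; it therefore suffices to treat $A,B$ as constant and to work on a single fibre $\Sp^2\subset so(TM)$. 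Regarding $\wh A$ as the ambient vector field $u\mapsto[u,A]$, whose differential in a direction $w$ is $w\mapsto[w,A]$, I obtain $[\wh A,\wh B](u)=[[u,A],B]-[[u,B],A]$, and the Jacobi identity collapses this to $[u,[A,B]]=\wh{[A,B]}(u)$. Equivalently, the $\wh A$ are the fundamental fields of the conjugation action $u\mapsto e^{-tA}ue^{tA}$, whose Lie brackets reproduce the bracket of $so(TM)$.

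For the second bracket I would expand $\hh(\theta_i)=\theta_i+\wh{\eta(\theta_i)}$ and split $[\hh(\theta_i),\wh A]=[\theta_i,\wh A]+[\wh{\eta(\theta_i)},\wh A]$. In the mixed bracket $[\theta_i,\wh A]$ the field $\theta_i$ carries no fibre component, while the coefficients of $\wh A(m,u)=[u,A(m)]$ depend on the base only through $A$; hence, moving along the base with the fibre coordinate $u$ held fixed, only the derivative of $A$ survives, and this term is the first summand $\wh{\nabla^g_{\theta_i}A}$. The purely vertical bracket $[\wh{\eta(\theta_i)},\wh A]$ is then covered by the first formula and produces the second summand $\wh{[\eta(\theta_i),A]}$, which together give the stated identity.

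For the third bracket I would expand both lifts and collect the four sub-brackets: the base term $[\theta_i,\theta_j]$, the two mixed terms giving $\wh{\theta_i(\eta(\theta_j))}-\wh{\theta_j(\eta(\theta_i))}$ as in the previous step, and the vertical term $[\wh{\eta(\theta_i)},\wh{\eta(\theta_j)}]=\wh{[\eta(\theta_i),\eta(\theta_j)]}$ from the first formula. The vertical part is thus $\wh{\theta_i(\eta(\theta_j))-\theta_j(\eta(\theta_i))+[\eta(\theta_i),\eta(\theta_j)]}$, which by the structure equation $d\eta(\theta_i,\theta_j)=\theta_i(\eta(\theta_j))-\theta_j(\eta(\theta_i))-\eta([\theta_i,\theta_j])$ and $(\eta\w\eta)(\theta_i,\theta_j)=[\eta(\theta_i),\eta(\theta_j)]$ rewrites as $\wh{\eta([\theta_i,\theta_j])}+\wh{(d\eta+\eta\w\eta)(\theta_i,\theta_j)}=\wh{\eta([\theta_i,\theta_j])}-\wh{R(\theta_i,\theta_j)}$. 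Adding the base term $[\theta_i,\theta_j]$ to $\wh{\eta([\theta_i,\theta_j])}$ rebuilds $\hh{[\theta_i,\theta_j]}$, and identifying the $so(TM)$-valued curvature $R(\theta_i,\theta_j)$ with the curvature operator evaluated on $\theta_i\w\theta_j$ yields the claim.

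The linear-algebra bookkeeping is routine; the genuine points of care are the signs, namely the paper's curvature convention $R=-(d\eta+\eta\w\eta)$ together with the orientation of each Lie bracket, and the identification, via the symmetries of the Riemann tensor, of the $so(TM)$-valued two-form $R(\theta_i,\theta_j)$ with the symmetric curvature operator applied to $\theta_i\w\theta_j$, since it is the latter that appears in $\wh{R(\theta_i\w\theta_j)}$.
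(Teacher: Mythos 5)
Your proposal is correct and follows essentially the same route as the paper: compute in the product trivialisation, use the Jacobi identity for the purely vertical bracket, isolate the base-derivative of $A$ for the mixed bracket, and recombine the four sub-brackets of the horizontal lifts via the structure equation and the convention $R=-(d\eta+\eta\w\eta)$. The extra remarks (fundamental fields of the conjugation action, the identification of the $so(TM)$-valued $2$-form with the operator on $\bigwedge^2 TM$ via $\phi$) are consistent with the paper's setup and add clarity without changing the argument.
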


\begin{proof}
 At a point $p=(m,u)$ of $\T$,
\begin{enumerate}
 \item $
[\wh A,\wh B]=\left[\left[u, A\right],\left[u,B\right]\right]
= \left[\left[u, A\right],B\right]-\left[\left[u,B\right],A\right]
=\left[u,\left[A,B\right]\right].
$
The map $A\mapsto \wh{A}$ is hence a morphism of Lie algebras.
\item First note that,
$[\theta_i,\wh{A}]=[\theta_i,[u,A]]=[u,\nabla^g_{\theta_i}A]=\wh{ \nabla^g_{\theta_i}A }$.
Hence,
$$
[\hh{(\theta_i)}, \wh A]=[\theta_i+\wh{\eta(\theta_i)},\wh A]
=\wh{\nabla^g_{\theta_i}A }+[\wh{\eta(\theta_i),A]}.$$
\item The result derives from previous remarks
$$\begin{array}{ccc}
[\hh{(\theta_i)},\hh{(\theta_j)}]&=&
[\theta_i+\wh{\eta(\theta_i)},\theta_j+\wh{\eta(\theta_j)}]\\
&=&[\theta_i,\theta_j]+\wh{\nabla^g_{\theta_i}\eta(\theta_j)}
-\wh{\nabla^g_{\theta_j}\eta(\theta_i)}+
[\wh{\eta(\theta_i)},\wh{\eta(\theta_j)}]\\
&=&[\theta_i,\theta_j]+\wh{d
\eta(\theta_i,\theta_j)}+\wh{\eta([\theta_i.\theta_j])}+
\wh{[\eta(\theta_i),\eta(\theta_j)]}\\
&=&[\theta_i,\theta_j]+\wh{\eta([\theta_i.\theta_j])}+\wh{\Big(d
\eta+\eta\w\eta\Big)} (\theta_i,\theta_j)
\\
&=&\hh{[\theta_i,\theta_j]}-\wh{R(\theta_i\w\theta_j)}.
\end{array}$$

\end{enumerate}
\end{proof}

\begin{rema}
This last relation shows that the curvature $R$ accounts for the lack of integrability
of the horizontal distribution $\hc$.
\end{rema}

We now discuss adding type considerations. In the next lemma, the exponent $t$ denotes either types $h$ or $a$.
\begin{lemm}\label{Jlinearite}\label{bracket-type}
 For every vertical vector field $U$ on $\T$, the following commutation relations hold
\begin{enumerate}
\item $\hc[\hc\theta_i,U]=0 \textrm{ and }\hc[\J\hc\theta_i,U]=-U(\hc\theta_i)$
\item $\mbox{} [\hc\theta_i,\J U]=\J[\hc\theta_i,U]\textrm{ and }\vc[\J\hc\theta_i,\J U]=\J\vc[\J\hc\theta_i,U]$
\item $\hc[\hc\theta_i^h,U^{t}]=\frac{i}{2}U^{t}(\hc\theta_i^{t})
\textrm{ and }\hc[\hc\theta_i^a,U^{t}]=-\frac{i}{2}U^{t}(\hc\theta_i^{t})$
\item $ [ \hc\theta_i, U^{t} ]= [ \hc\theta_i, U ]^{t}\textrm{ and }\vc [ \J\hc\theta_i,U^{t} ]
=(\vc [ \J\hc\theta_i,U ])^{t}$.
\end{enumerate}
\end{lemm}
\begin{proof}
We use the notations of lemma~\ref{bracket}.
In order to get the first equality, simply note that for any smooth function $f$,
using the previous lemma and the properties of the Lie brackets, the vector field $[\hc\theta_i,f\wh A]$ is vertical.
The second computation hence reduces to
\begin{eqnarray*}
  \hc[\J\hc\theta_i,U]&=&\hc[\hc(\sum u_{ji}\theta_j),U]\\
&=&-\sum (U\cdot u_{ji})\hc\theta_j=-\sum U_{ji}\hc\theta_j:=-U(\hc\theta_i).
\end{eqnarray*}
Use the bracket linearity to get
$\hc[\hc\theta_i^h,U]=\frac{i}{2} U(\hc\theta_i)=-\hc[\hc\theta_i^a,U]$.
Note that this is a tensor in $U$ so that
in particular, $$\hc[\hc\theta_i^h,U^a]=\frac{i}{2} U^a( \hc\theta_i)
=\frac{i}{2} \frac{Id+iJ}{2}U^a( \hc\theta_i)
=\frac{i}{2}U^a( \hc\theta_i^a).$$

 The third formula follows from the fact that the parallel transport along horizontal directions
respect the canonical metric and the orientation of the fibres, hence the vertical complex structures.
The forth follows from the fact that
 $\vc[\hc\theta_i,\J U]=\J\vc[\hc\theta_i,U]$ is a tensor in $\theta_i$.

 The last four follow by linearity.
\end{proof}

\begin{rema}
 The formula $ \hc[\J\hc\theta_i,U]=-U(\hc\theta_i)$ can be made more intrinsic by considering the map
$\varphi : \pi^{-1}(m)\to \SO (T_mM)$
that encodes the variation of complex structure on $T_mM$.
We find
\[\pi_\star  [\J\hc\theta_i,U]=-\varphi_\star (U)( \theta_i).\]
\end{rema}

\subsection{Computations of $d\wb$ and $d'\wb$}

Let  $\wb=\G(\J\cdot,\cdot)$ be the Hermitian form on the twistor space~$\T$.
Its exterior derivative is given by the following
\begin{prop}\label{prop:dwb}
The exterior derivative $d\wb$ of the Hermitian form $\wb$ on the
twistor space $\T$ of an anti-self dual Riemannian $4$-manifold
$(M,g)$
  vanishes on pure directional (i.e. horizontal or vertical) vectors
except when evaluated on two horizontal vectors and one vertical vector.
More precisely then,
\[\forall X,Y\in TM,\forall U\in\vc\ \
d\wb(U,\hh X,\hh Y)=\G\Big(\wh{(\frac{1}{2}Id-R)(X\w Y)},\J U\Big).\]
\end{prop}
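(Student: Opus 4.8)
The plan is to expand $d\wb$ through the standard invariant formula for the exterior derivative of a $2$-form,
\begin{multline*}
d\wb(V_1,V_2,V_3)=V_1\wb(V_2,V_3)-V_2\wb(V_1,V_3)+V_3\wb(V_1,V_2)\\
-\wb([V_1,V_2],V_3)+\wb([V_1,V_3],V_2)-\wb([V_2,V_3],V_1),
\end{multline*}
and to evaluate it on triples of pure-type vectors. I extend a vertical vector $U$ to a vertical field and each horizontal argument $\hh X$ to the horizontal lift of a vector field on $M$, choosing at the base point $m$ a parallel orthonormal frame so that $[\theta_i,\theta_j]$ and $\eta$ vanish at $m$; tensoriality of $d\wb$ makes this harmless.

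The structural input that drives every cancellation is that the splitting $T\T=\hc\oplus\vc$ is $\G$-orthogonal and $\J$-invariant, so that $\wb(\hc X,U)=\G(\J\hc X,U)=0$ for horizontal $\hc X$ and vertical $U$. This annihilates outright every term $\hc X\cdot\wb(U,\hh Y)$ (the derivative of a function that is identically zero), and it means that a term $\wb([\,\cdot\,,\,\cdot\,],W)$ only detects the horizontal part of the bracket when $W$ is horizontal and only its vertical part when $W$ is vertical. Feeding in Lemma~\ref{bracket-type}, which gives $\hc[\hh X,U]=0$, and Lemma~\ref{bracket}, whose third line shows that the vertical part of $[\hh X,\hh Y]$ equals $-\wh{R(X\w Y)}$, I can discard most of the six terms in each configuration.

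There remain four configurations. For three vertical vectors the fibre is an integrable $\C P^1$ on which $\wb$ restricts to the (closed) Kähler form and all vertical brackets stay vertical, so $d\wb$ is the vanishing differential of that Kähler form. For two vertical vectors and one horizontal $\hh X$, the surviving terms reassemble precisely into the Lie derivative $(\mathcal{L}_{\hh X}\wb)(U_1,U_2)$, which vanishes because horizontal parallel transport preserves both the fibre metric and the fibre complex structure — this is the content of Lemma~\ref{Jlinearite}. For three horizontal vectors, the parallel-frame choice makes each bracket purely vertical, hence $\G$-orthogonal to the remaining horizontal slot, while each derivative term $\hh{\theta_i}\cdot g(u\theta_j,\theta_k)$ vanishes because the horizontal flow transports $u,\theta_j,\theta_k$ by parallel transport, under which $g$ is constant.

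The only surviving configuration is the mixed one, where just two terms remain:
\[
d\wb(U,\hh X,\hh Y)=U\cdot\wb(\hh X,\hh Y)-\wb([\hh X,\hh Y],U).
\]
Into the bracket term $-\wb([\hh X,\hh Y],U)$ I substitute $\vc[\hh X,\hh Y]=-\wh{R(X\w Y)}$ and use that $\J$ is $\G$-skew on the vertical bundle, i.e. $\G(\J a,b)=-\G(a,\J b)$; this turns it into $-\G(\wh{R(X\w Y)},\J U)$, the desired $-R$ contribution. The first term is $U\cdot\wb(\hh X,\hh Y)=U\cdot g(uX,Y)=g(UX,Y)$, since a vertical direction fixes $m$ and moves the complex structure $u$ with velocity $U$. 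The real work, and the step I expect to be the only delicate one, is to match this with $\G(\wh{\tfrac12(X\w Y)},\J U)$: rewriting $g(UX,Y)=g(\phi(U),(X\w Y)^+)=-\tfrac12\,\mathrm{tr}\big(U\,\phi^{-1}((X\w Y)^+)\big)$ and evaluating the target with $\wh{A}=[u,A]$, $\J U=uU$, the fibre metric $-\tfrac12\,\mathrm{tr}(\,\cdot\;\cdot\,)$, and the anticommutation $Uu=-uU$ (whence $uUu=U$), a one-line trace computation reproduces exactly the coefficient $\tfrac12$ with no stray constant. Keeping the radius-$\sqrt2$ normalisation of the fibre, the rule $\J V=uV$, and the factor in the fibre metric mutually consistent is the one genuinely error-prone point; everything else is bookkeeping. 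Summing the two contributions yields $d\wb(U,\hh X,\hh Y)=\G\big(\wh{(\tfrac12 Id-R)(X\w Y)},\J U\big)$, as claimed.
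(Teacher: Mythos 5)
Your proof is correct and follows essentially the same route as the paper: the invariant formula for $d\wb$, orthogonality of the splitting plus the bracket lemmas to kill all but the mixed configuration, and the same trace identity $g(UX,Y)=\tfrac12\G(\wh{X\w Y},\J U)$ combined with $\vc[\hh X,\hh Y]=-\wh{R(X\w Y)}$ for the surviving terms. The only cosmetic differences are that the paper disposes of the all-vertical case by the bare dimension count (a $3$-form on a real $2$-dimensional fibre vanishes) and writes out the two-vertical case explicitly with $\nabla^g A$ rather than packaging it as a vanishing Lie derivative.
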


\begin{proof}
The results of this proposition are well known and can be found for example in~\cite{Barto}.
 The usual formula for the exterior derivatives of a $2$-form reduces here by orthogonality
 using the bracket computations of lemma~\ref{bracket} to
\[
\begin{array}{ccc}
d\wb(U,\hh\theta_i,\hh\theta_j)&=& U\cdot\wb(\hh\theta_i,\hh\theta_j)-
\wb\big([\hh\theta_i,\hh\theta_j],U\big)\\
&=&U\cdot g(u(\theta_i),\theta_j)+
\G\big([\hh\theta_i,\hh\theta_j],\J U\big)\\
&=&-U_{ij}-\G\big(\wh{R(\theta_i\w\theta_j)},\J U\big).
\end{array}
\]
choosing vertical coordinates $(u_{ij})$ such that $u(\theta_j)=\sum u_{ij}\theta_i$.
Now set $E=\theta_i\w\theta_j$. From the definition of $\G$,
and the property $uU=-Uu$ of the vertical vector $U$one has
\begin{eqnarray}\label{GU}\nonumber
\G(\wh{\theta_i\wedge\theta_j},\J U)&=&-1/2tr\Big((uE-Eu)uU\Big)\\&=&-1/2tr(uEuU)-1/2tr(EU)\\
&=&-tr(EU)=-2U_{ij}.\nonumber
\end{eqnarray}
It remains to check the vanishing of all the other pure directional components.
As the fibres are of real dimension two, the $3$-form $d\wb$ restricts to zero on fibres.
Let $A, B :M\to so(TM)$ be two sections. In normal coordinates at the centre of which
the connection $1$-form $\eta$ vanishes
\[
\begin{array}{lll}
d\wb(\hh\theta_i,\wh A,\wh B)&=&\hh\theta_i\cdot\wb(\wh A,\wh B)-
\wb\Big([\hh\theta_i,\wh A],\wh B\Big)
+\wb\Big([\hh\theta_i,\wh B],\wh A\Big)\\
&=&\theta_i\cdot\wb(\wh A,\wh B)-\wb\Big(\wh{\nabla^g_{\theta_i} A},\wh B\Big)
+\wb\Big(\wh{\nabla^g_{\theta_i} B},\wh A\Big)\\
&=&\wb\Big(\wh{\nabla^g_{\theta_i}A},\wh B\Big)
+\wb\Big(\wh A,\wh{\nabla^g_{\theta_i}B}\Big)\\
&&-\wb\Big(\wh{\nabla^g_{\theta_i}A},\wh B\Big)
+\wb\Big(\wh{\nabla^g_{\theta_i}B},\wh A\Big)=0.
\end{array}
\]
Finally for a triple of horizontal lifts, still with normal coordinates,
\[
\begin{array}{lll}
d\wb(\hh\theta_i,\hh\theta_j,\hh \theta_k)&=& 
\hh\theta_i\cdot\wb(\hh\theta_j,\hh\theta_k)
-\hh\theta_j\cdot\wb(\hh\theta_i,\hh\theta_k)\\
&&+\hh\theta_k\cdot \wb(\hh\theta_i,\hh\theta_j)
-\wb([\hh\theta_i,\hh\theta_j],\hh\theta_k)\\
&&+\wb([\hh\theta_i,\hh\theta_k],\hh\theta_j)
-\wb([\hh\theta_j,\hh\theta_k],\hh\theta_i)\\
&=&
\theta_i\cdot g(u\theta_j,\theta_k)-\theta_j\cdot g(u\theta_i,\theta_k)\\&&
+\theta_k\cdot g(u\theta_i,\theta_j)-g(u[\theta_i,\theta_j],\theta_k)
\\&&+g(u[\theta_i,\theta_k],\theta_j)
-g(u[\theta_j,\theta_k],\theta_i) = 0
\end{array}
\]
for $\theta_i\cdot u=0$ and
for all the remaining quantities can be expressed in terms of $\nabla^g_{\theta_a}\theta_b=0$ only.
\end{proof}

This result leads to an expression for the $(2,1)$-part $d'\wb$ of
$d\wb$.
\begin{prop}~\label{d'w}
For all vertical vectors $U$, one has
\begin{enumerate}
\item $\displaystyle d'\wb(U^a,\hh\theta_i^h,\hh\theta_j^h)=
\displaystyle(\frac{s}{6}-1)U^a_{ij}$.
\item $d'\wb(U^h,\hh\theta_i^h,\hh\theta_j^a)
=-i\G\Big(\wh{B(\theta_i^h\w\theta_j^a)},U^h\Big)$.
\end{enumerate}
\end{prop}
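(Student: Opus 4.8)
The plan is to recognise $d'\wb$ as the $(2,1)$-part of the $3$-form $d\wb$ and then read off its values from the formula of the previous proposition. Since a $(3,0)$-, $(1,2)$- or $(0,3)$-form vanishes on any triple consisting of two $(1,0)$ vectors and one $(0,1)$ vector, both equalities i) and ii) are simply $d\wb$ evaluated on such a triple. As each triple involves exactly one vertical argument ($U^a$ or $U^h$) and two horizontal lifts, the previous proposition applies after $\C$-multilinear extension and gives
\begin{align*}
d'\wb(U^a,\hh\theta_i^h,\hh\theta_j^h)&=\G\Big(\wh{(\tfrac12 Id-R)(\theta_i^h\w\theta_j^h)},\J U^a\Big),\\
d'\wb(U^h,\hh\theta_i^h,\hh\theta_j^a)&=\G\Big(\wh{(\tfrac12 Id-R)(\theta_i^h\w\theta_j^a)},\J U^h\Big).
\end{align*}
It then remains to simplify each right-hand side using the type decompositions of lemma~\ref{type} and the remark following lemma~\ref{bracket}.

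For i), lemma~\ref{type}(2) gives $\theta_i^h\w\theta_j^h\in\bw^+_\C$. On $\bw^+$ the operator $\wh{R(\cdot)}$ reads $\wh{(W^++\tfrac{s}{12}Id)(\cdot)}=\tfrac{s}{12}\wh{(\cdot)}$ by the remark and the anti-self-dual hypothesis $W^+=0$, so that $\wh{(\tfrac12 Id-R)(\theta_i^h\w\theta_j^h)}=\tfrac{6-s}{12}\,\wh{\theta_i^h\w\theta_j^h}$. The core computation is the pairing $\G(\wh{\theta_i^h\w\theta_j^h},\J U^a)$. Writing $\J V=uV$ and using the verticality identity $uVu=V$ (from $Vu=-uV$ and $u^2=-Id$), the trace defining $\G$ on the fibres collapses to $\G(\wh{X\w Y},\J V)=-tr\big(\phi^{-1}(X\w Y)\,V\big)$; together with $tr\big(\phi^{-1}(X\w Y)V\big)=2g(X,VY)$ for anti-symmetric $V$, this yields $\G(\wh{\theta_i^h\w\theta_j^h},\J U^a)=-2\,g(\theta_i^h,U^a\theta_j^h)$. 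Finally I would use that $U^a$, being the $(0,1)$-part, satisfies $u\circ U^a=-iU^a$ (equivalently $U^a u=iU^a$) as an endomorphism; this gives at once $U^a\theta_j^h=U^a\theta_j$ and $g(\theta_i^h,U^a\theta_j)=g(\theta_i,U^a\theta_j)=U^a_{ij}$, so that $g(\theta_i^h,U^a\theta_j^h)=U^a_{ij}$. Assembling, $d'\wb(U^a,\hh\theta_i^h,\hh\theta_j^h)=\tfrac{6-s}{12}(-2U^a_{ij})=(\tfrac{s}{6}-1)U^a_{ij}$.

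For ii), lemma~\ref{type}(3) gives $\theta_i^h\w\theta_j^a\in\bw^-_\C\oplus\Vect(u)_\C$. Here the term $\tfrac12 Id$ contributes nothing: the only $\bw^+$-component of $\theta_i^h\w\theta_j^a$ lies in $\Vect(u)$, and $\wh u=[u,u]=0$, so $\wh{\theta_i^h\w\theta_j^a}=0$. For the curvature term, $W^+=0$ again annihilates the $\Vect(u)$ contribution, and by the remark only the $\bw^-$-component is relevant, so $\wh{R(\theta_i^h\w\theta_j^a)}=\wh{B(\theta_i^h\w\theta_j^a)}$. Hence $\wh{(\tfrac12 Id-R)(\theta_i^h\w\theta_j^a)}=-\wh{B(\theta_i^h\w\theta_j^a)}$, and with $\J U^h=iU^h$ we obtain directly $d'\wb(U^h,\hh\theta_i^h,\hh\theta_j^a)=-i\,\G(\wh{B(\theta_i^h\w\theta_j^a)},U^h)$.

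The main obstacle is the bilinear pairing in i): one must extend $\G$ and the trace identity $\C$-bilinearly, keep track of which half-projections $\tfrac12(Id\mp iu)$ and which $\bw^\pm$-components survive the operator $\wh{\cdot}$, and above all exploit the relation $u\circ U^a=-iU^a$. It is precisely this property of the $(0,1)$-part that forces $g(\theta_i^h,U^a\theta_j^h)$ to collapse to the single matrix entry $U^a_{ij}$ rather than to a messier combination of entries of $U^a$ and $u$. By contrast, part ii) is essentially immediate once one observes the vanishing $\wh u=0$ that removes the $\tfrac12 Id$ term and isolates the $B$-contribution.
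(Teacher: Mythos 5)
Your proof is correct and follows essentially the same route as the paper: specialise the $d\wb$ formula to the relevant type, use Lemma~\ref{type} together with $W^+=0$ to replace $R$ by $\frac{s}{12}Id$ on $\bw^+_\C$ in i), and use $\wh{u}=[u,u]=0$ to kill the $\frac12 Id$ term and isolate $B$ in ii). The only difference is cosmetic: in i) the paper expands $\theta_i^h\w\theta_j^h=\frac14(Id-iu)(\theta_i\w\theta_j-u\theta_i\w u\theta_j)$ and evaluates the two real pairings $\G(\wh{\theta_i\w\theta_j},\J U)=-2U_{ij}$ and $\G(\wh{u\theta_i\w u\theta_j},\J U)=2U_{ij}$ separately, whereas you evaluate the pairing in one stroke via the trace identity and the eigenvalue relation $uU^a=-iU^a$; both computations are valid and give the same result.
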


\begin{proof}
\begin{enumerate}
\item Because $\theta_i^h\wedge\theta_j^h=\displaystyle
\frac{1}{4}(Id-iu)(\theta_i\wedge\theta_j-u\theta_i\wedge
 u\theta_j)$ belongs to $\bw^+_\C$ we infer by lemma~\ref{type}
$$\begin{array}{lll}
d'\wb(U^a,\wh\theta_i^h,\wh\theta_j^h)
&=&\frac{1}{4}\G\Big(\widehat{(\frac{1}{2}Id-R)(Id-iu)(\theta_i\w\theta_j-u\theta_i\w u\theta_j)},
\J U^a\Big)\\
&=&\frac{1}{4}\G\Big(\widehat{(\frac{1}{2}-\frac{s}{12})(Id-iu)(\theta_i\w\theta_j-u\theta_i\w u\theta_j)},
\J U^a\Big)\\ &&\quad{\textrm{ for }W^+=0}\\
&=&\frac{1}{2}(\frac{1}{2}-\frac{s}{12})\G\Big(\frac{Id-i\J}{2}\widehat{
(\theta_i\wedge\theta_j-u\theta_i\wedge u\theta_j)},\J U^a\Big)\\
&=&\frac{1}{2}(\frac{1}{2}-\frac{s}{12})\G\Big(\widehat{(\theta_i\w\theta_j-u\theta_i\w u\theta_j)},
\frac{Id+i\J}{2}\J U^a\Big)\\
&=&\frac{1}{4}(1-\frac{s}{6})\G\Big(\widehat{(\theta_i\w\theta_j-u\theta_i\w u\theta_j)},\J U^a\Big).
\end{array}$$
But we already found in~\ref{GU} that $\G(\widehat{\theta_i\wedge\theta_j},\J
U)= -2U_{ij}$. Writing $u\theta_i=\sum u_{ki}\theta_k$, we find
$$\G(\widehat{u\theta_i\wedge u\theta_j},\J
U)=-2u_{ki}u_{lj}U_{kl}=2(uUu)_{ij}=2U_{ij}$$ that leads to $
 d'\wb(U^a,\wh\theta_i^h,\wh\theta_j^h)=(\frac{s}{6}-1)U^a_{ij}.
$ \item Because $\theta_i^h\w\theta_j^a$ belongs to
$\bw^-_\C\oplus Vect(u)$, we infer by lemma~\ref{type} on the one
hand
$\wh{R(\theta_i^h\w\theta_j^a)}=\wh{B(\theta_i^h\w\theta_j^a)}$
and on the other hand
 $\wh{\theta_i^h\w\theta_j^a}=0$. This gives
\begin{eqnarray*}
d'\wb(U^h,\wh\theta_i^h,\wh\theta_j^a)&=&
\G\Big(\wh{(\frac{1}{2}Id-R)(\theta_i^h\w\theta_j^a)},JU^h\Big)
=-i\G\Big(\wh{B(\theta_i^h\w\theta_j^a)},U^h\Big).
\end{eqnarray*}
\end{enumerate}
\end{proof}

\begin{coro}
 The form $\wb$ is Kähler if and only if $R\vert_{\bw^+}=\displaystyle\frac{1}{2}Id_{\bw^+}$.
\end{coro}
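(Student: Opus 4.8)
The plan is to read the Kähler condition off the preceding proposition, turning it into a pointwise algebraic condition on $R$. Since we assume $W^+=0$ throughout, the almost-complex structure $\J$ is integrable, so $\wb$ is Kähler precisely when $d\wb=0$. By the previous proposition $d\wb$ vanishes on every pure directional triple except on two horizontal and one vertical vector, where
$$d\wb(U,\hh X,\hh Y)=\G\Big(\wh{(\frac12 Id-R)(X\w Y)},\J U\Big).$$
Hence $\wb$ is Kähler if and only if this quantity vanishes for all vertical $U$ and all $X,Y\in TM$.

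First I would eliminate $U$. The vector $\wh{(\frac12 Id-R)(X\w Y)}$ lies in the fibre $\vc_p$, and as $U$ ranges over $\vc_p$ so does $\J U$ (the splitting is $\J$-invariant); since $\G$ is positive definite on $\vc_p$, the vanishing of the pairing for all $U$ is equivalent to $\wh{(\frac12 Id-R)(X\w Y)}=0$ at every point $p=(m,u)$.

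Next I would identify this vanishing of a vertical vector field. By the remark following the definition of $\wh A$, only the $\bw^+$-component is relevant: writing $X\w Y=\alpha^++\alpha^-$ with $\alpha^\pm\in\bw^\pm$ and using the block form of $R$ with $W^+=0$, the $\bw^+$-component of $(\frac12 Id-R)(X\w Y)$ equals $\xi:=(\frac12-\frac{s}{12})\alpha^+-B\alpha^-\in\bw^+$, an element independent of $u$. The condition is then $[u,\xi]=0$ for every compatible complex structure $u$. Since the compatible complex structures are exactly the vectors of $\bw^+$ of norm $\sqrt2$, they span $\bw^+$, which as a Lie algebra is isomorphic to $so(3)$ and has trivial centre; therefore $[u,\xi]=0$ for all $u$ forces $\xi=0$.

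Finally I would let $X,Y$ vary. The assignment $X\w Y\mapsto\xi$ is linear, and decomposable bivectors span $\bw^2$, so $\xi=0$ for all $X,Y$ is equivalent to $(\frac12-\frac{s}{12})\alpha^+-B\alpha^-=0$ for all $\alpha^+\in\bw^+$ and $\alpha^-\in\bw^-$ independently, i.e. to $s=6$ together with $B=0$. On $\bw^+$ one has $R=\frac{s}{12}Id_{\bw^+}+{}^tB$ (again using $W^+=0$), so these two conditions are exactly the statement $R|_{\bw^+}=\frac12 Id_{\bw^+}$: the vanishing of ${}^tB$ (equivalently $B$) says $R$ preserves $\bw^+$, and $s=6$ says it acts there by $\frac12$. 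The only genuine subtlety is the step passing from the vanishing of the vertical field to $\xi=0$, which rests on the identification of $\bw^+$ with $so(3)$ and the triviality of its centre; the remaining manipulations are bookkeeping with the block decomposition of $R$.
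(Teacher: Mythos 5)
Your proof is correct, and it reaches the same two conditions ($s=6$ and $B=0$) as the paper, but by a somewhat different route. The paper's proof is a one-liner that reads both conditions off the type-decomposed formulas of Proposition~\ref{d'w}: item (i) gives $(\frac{s}{6}-1)U^a_{ij}=0$ for all $U$, hence $s=6$, and item (ii) gives $B=0$; there the dependence on the fibre point $u$ has already been absorbed into the $(1,0)/(0,1)$ decomposition, so no further argument is needed. You instead start from the real, un-typed formula $d\wb(U,\hh X,\hh Y)=\G\big(\wh{(\frac{1}{2}Id-R)(X\w Y)},\J U\big)$, which obliges you to supply two extra steps: eliminating $U$ by positive-definiteness of $\G$ on the ($\J$-invariant) fibres, and eliminating the fibre variable from the condition $[u,\xi]=0$ for all compatible $u$ via the triviality of the centre of $\bw^+\simeq so(3)$. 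That second step is the one genuine piece of content your route requires that the paper's does not, and you identify and justify it correctly; the subsequent $\bw^+\oplus\bw^-$ bookkeeping and the translation into $R\vert_{\bw^+}=\frac{1}{2}Id_{\bw^+}$ (including the off-diagonal block $^tB$) match the paper's conventions. The trade-off: your argument is more self-contained, needing only the first proposition on $d\wb$ rather than the typed computations of Proposition~\ref{d'w}, while the paper's is shorter because it leans on work already done.
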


\begin{proof} The vanishing $d\wb=0$ gives $B=0$ (cf $ii$) and  $s/12=1/2$ (cf. $i$).
The converse is straightforward.
\end{proof}

\begin{rema}
 This is the case for the round sphere $\Sp^4$ and the projective space $\C P^2$ with a Fubini-Study metric.
More generally, Hitchin~\cite{Hit81} actually proved that these are the only compact Kähler twistor spaces.
\end{rema}

\subsection{Computation of $\hess\wb$}

In this subsection, we will compute the real $4$-form
$id''d'\wb=idd'\wb$ of type $(2,2)$. We will express its values on
pure directional vectors. This theorem accounts for the main features we found.
\begin{theo} \label{theo1}
The hessian $\hess\wb$ of the Hermitian form $\wb$ on the twistor
space $\T(M,g)$ of an anti-self dual Riemannian $4$-manifold
$(M,g)$ with constant scalar curvature $s$
 is given on pure directions and pure types by the following formulae where
$\hh\theta_i$ are basic horizontal lifts  and $U_i$ vertical vectors,
 \begin{enumerate}
\item $$\hess\wb(U_1^h,U_2^h,U_3^a,U_4^a)=0$$
\item $$\hess\wb(U_1^h,U_2^h,U_3^a,\hh\theta_i^a)=\hess\wb(\hh\theta_i^h,U_3^h,U_1^a,U_2^a)=0$$
\item $$\hess\wb(\hh\theta_i^h,\hh\theta_j^h,U_1^a,U_2^a)=\hess\wb(U_1^h,U_2^h, \hh\theta_i^a,\hh\theta_j^a)=0$$
\item \begin{eqnarray}\hess\wb\Big(\hh\theta_i^h,U_1^h,\hh\theta_j^a,U_2^a\Big)&=&
-U_2^a\cdot\G\Big(\wh{B(\theta_i^h\w\theta_j^a)},U_1^h\Big)\nonumber \\
&&-\frac{i}{2}U_{2\;mj}^a\G\Big(\wh{B(\theta_m^a\w\theta_i^h)},U_1^h\Big)\nonumber\\
&& +\frac{1}{2}(\frac{s}{6}-1)(U_2^a.U_1^h)_{ij}\label{1}\end{eqnarray}
\item $$\hess\wb(\hh\theta_i^h,\hh\theta_j^h,U^a,\hh\theta_k^a)=
\hess\wb(U^h,\hh\theta_k^h,\hh\theta_i^a,\hh\theta_j^a)=0$$
\item \begin{eqnarray}\hess\wb(\hh\theta_i^h,\hh\theta_j^h,\hh\theta_k^a,\hh\theta_l^a)
&=&\G\Big(\wh{B(\theta_j^h\w\theta_l^a)},
\wh{B(\theta_i^h\w\theta_k^a)}\Big)\nonumber
\\&&-\G\Big(\wh{B(\theta_i^h\w\theta_l^a)},\wh{B(\theta_j^h\w\theta_k^a)}\Big)\nonumber\\
&&-i(\frac{s}{6}-1)\frac{s}{12}(\wh{\theta_k^a\wedge\theta_l^a})^a_{ij}.\label{2}
\end{eqnarray}
\end{enumerate}
\end{theo}

\begin{proof}
\begin{enumerate}
\item   reflects the facts that the vertical distribution is integrable
and that the metric on the fibres is Kähler.
\item The non vanishing of $d'\wb$ requires two horizontal vectors.
The integrability of the vertical distribution hence shows the results.

\item
By the usual formula for the exterior derivative, omitting zero terms, we get
\begin{eqnarray*}
\lefteqn{d''d'\wb(\hc\theta_i^h,\hc\theta_j^h,U_1^a,U_2^a)}\\
&=&U_1^a\cdot d'\wb(\hc\theta_i^h,\hc\theta_j^h,U_2^a)
-U_2^a\cdot d'\wb(\hc\theta_i^h,\hc\theta_j^h,U_1^a)\\&&-d'\wb([U_1^a,U_2^a],\hc\theta_i^h,\hc\theta_j^h)\\
&&+d'\wb([\hc\theta_i^h,U_1^a],\hc\theta_j^h,U_2^a)-d'\wb([\hc\theta_i^h,U_2^a],\hc\theta_j^h,U_1^a)\\
&&+d'\wb([\hc\theta_j^h,U_1^a],\hc\theta_i^h,U_2^a)-d'\wb([\hc\theta_j^h,U_2^a],\hc\theta_i^h,U_1^a).
\end{eqnarray*}
From proposition~\ref{d'w} and lemma~\ref{bracket-type}, we infer that
the terms \\ $d'\wb([\hc\theta_i^h,U_1^a],\hc\theta_j^h,U_2^a)
=d'\wb(\hc[\hc\theta_i^h,U_1^a]^h,\hc\theta_j^h,U_2^a)$
in the last two lines vanishes for type reason.
As the scalar curvature is constant, the proposition~\ref{d'w} leads to
\begin{eqnarray*}
 \lefteqn{U_1^a.d'\wb(\hh\theta_i^h,\hh\theta_j^h,U_2^a)
-U_2^a.d'\wb(\hh\theta_i^h,\hh\theta_j^h,U_1^a)
-d'\wb([U_1^a,U_2^a],\hh\theta_i^h,\hh\theta_j^h)}\\
&&=(\frac{s}{6}-1)\Big(U_1^a.(U_2^a)_{ij}-U_2^a.(U_1^a)_{ij}-[U_1^a,U_2^a]_{ij}\Big)=0
\end{eqnarray*}
The second equality follows by conjugation.

\item By the usual formula for the exterior derivative, omitting zero terms, we get
\begin{eqnarray*}
\lefteqn{d''d'\wb\Big(\hh\theta_i^h,U_1^h,\hh\theta_j^a,U_2^a\Big)}\\
&=& U_2^a\cdot d'\wb\Big(U_1^h,\hh\theta_i^h,\hh\theta_j^a\Big)
-d'\wb\Big([\hh\theta_i^h,U_2^a],U_1^h,\hh\theta_j^a\Big)\\
&&-d'\wb\Big([\hh\theta_j^a,U_2^a],\hh\theta_i^h,U_1^h\Big)
+d'\wb\Big([\hc\theta_j^a,U_1^h],\hh\theta_i^h,U_2^a\Big)\\
&=&-i U_2^a\cdot\G\Big(\wh{B(\theta_i^h\w\theta_j^a)},U_1^h\Big)
-d'\wb\Big(\hh[\hh\theta_i^h,U_2^a]^h,U_1^h,\hh\theta_j^a\Big)\\
&&-d'\wb\Big(\hc[\hh\theta_j^a,U_2^a]^a,\hh\theta_i^h,U_1^h\Big)
+d'\wb\Big(\hc[\hh\theta_j^a,U_1^h]^h,\hh\theta_i^h,U_2^a\Big).
\end{eqnarray*}
From lemma~\ref{bracket-type}, we infer that the second term vanishes for type reasons,
and that for the third term $\hc[\hh\theta_j^a,U_2^a]=
\displaystyle-\frac{i}{2}U^a_{2\;mj}\hh\theta_m^{a}$.
Hence
$$
-d'\wb\Big([\hh\theta_j^a,U_2^a],\hh\theta_i^h,U_1^h\Big)
=\frac{1}{2}U^a_{2\;mj}\G\Big(\wh{B(\theta_m^a\w\theta_i^h)},U_1^h\Big).
$$
From lemma~\ref{bracket-type}, we infer that for the forth term
$\hc[\hh\theta_j^a,U_1^h]=-\displaystyle\frac{i}{2}
U^h_{1\;mj}\hh\theta_m^{h}$.
 Hence,
\begin{eqnarray*}
\lefteqn{d'\wb\Big([\hh\theta_j^a,U_1^h],\hh\theta_i^h,U_2^a\Big)}\\
&=&-\frac{i}{2}d'\wb\Big(U^h_{1\;mj}\hh\theta_m^h,\hh\theta_i^h,U_2^a\Big)\\
&=&-\frac{i}{2}(\frac{s}{6}-1) U^h_{1\;mj}U_{2\;mi}^a
=\frac{i}{2}(\frac{s}{6}-1)(U_2^aU_1^h)_{ij}.
\end{eqnarray*}

\item    Still from the formula of the exterior derivative
\begin{eqnarray*}
\lefteqn{d''d'\wb\Big(\hh\theta_i^h,\hh\theta_j^h,U^a,\hh\theta_k^a\Big)}\\
&=& -\hh\theta_k^a.d'\wb\Big(\hh\theta_i^h,\hh\theta_j^h,U^a\Big)
+d'\wb\Big(\vc[\hh\theta_k^a,U^a],\hh\theta_i^h,\hh\theta_j^h\Big)
\\
&&+d'\wb\Big(\vc[\hh\theta_i^h,U^a],\hh\theta_j^h,\hh\theta_k^a\Big)
 -d'\wb\Big(\vc[\hh\theta_j^h,U^a],\hh\theta_i^h,\hh\theta_k^a\Big)\\
&& +d'\wb\Big(\hc[\hh\theta_j^h,\hh\theta_k^a],\hh\theta_i^h,U^a\Big)
-d'\wb\Big(\hc[\hh\theta_i^h,\hh\theta_k^a],\hh\theta_j^h,U^a\Big).
\end{eqnarray*}
From proposition~\ref{d'w} we can write
 $$\hh\theta_k^a.d'\wb\Big(\hh\theta_i^h,\hh\theta_j^h,U^a\Big)=
\hh\theta_k^a.\Big((\frac{s}{6}-1)U^a_{ij}\Big)=\hh\theta_k^a.\Big((\frac{s}{6}-1)U^a_{ij}\Big)=0$$
computed in normal coordinates centred at a point $m$.
In such coordinates, we can choose $U=\wh A$ with furthermore
$\nabla^g_{\theta_i}A=0$ at $m$.
By lemma~\ref{Jlinearite}, $\vc[\hh\theta_k^h,U^a]=(\vc[\hh\theta_k^h,U])^a
=(\wh{(\nabla^g_{\theta_k}A)}+\wh{ [\eta(\theta_k),A] })^a =0$.
Now, the vanishing of the third and forth terms, follows from
 lemma~\ref{Jlinearite}, after which $\vc[\hh\theta_i^h,U^a]$ is of type~$(0,1)$.
Still at the centre $m$ of normal coordinates , we have $\hc
[\wh\theta_i^h,\wh\theta_k^a]=0$ because
$[\theta_i,\theta_j]=\nabla^g_{\theta_i}\theta_j-\nabla^g_{\theta_j}\theta_i=0$.
As $\hh\theta_i\cdot u=0$, we conclude
$\hc[\wh\theta_j^h,\wh\theta_k^a]=0$.

\item Again from the formula of the exterior derivative and from~\ref{d'w}
\begin{eqnarray*}
\lefteqn{d''d'\wb\Big(\hh\theta_i^h,\hh\theta_j^h,\hh\theta_k^a,\hh\theta_l^a\Big)}\\
&=&
d'\wb\Big([\hh\theta_i^h,\hh\theta_k^a],\hh\theta_j^h,\hh\theta_l^a\Big)
+d'\wb\Big([\hh\theta_j^h,\hh\theta_l^a],\hh\theta_i^h,\hh\theta_k^a\Big)
\\
 &&-d'\wb\Big([\hh\theta_j^h,\hh\theta_k^a],\hh\theta_i^h,\hh\theta_l^a\Big)
-d'\wb\Big([\hh\theta_i^h,\hh\theta_l^a],\hh\theta_j^h,\hh\theta_k^a\Big)
\\
&&-d'\wb\Big([\hh\theta_k^a,\hh\theta_l^a],\hh\theta_i^h,\hh\theta_j^h\Big).
\end{eqnarray*}
As $\theta_i^h\wedge\theta_k^a$ belongs to $\bw^-_\C\oplus
Vect(u)$ and $\vc[\hh\theta_i^h,\hh\theta_k^a]$ is a tensor, we
find
 $
[\hh\theta_i^h,\hh\theta_k^a]_V=-\wh{R(\theta_i^h\wedge\theta_k^a)}
=-\wh{B(\theta_i^h\wedge\theta_k^a)}$ using lemma~\ref{type}.
 This leads to
\begin{eqnarray*}
\lefteqn{d'\wb\Big([\hh\theta_i^h,\hh\theta_k^a],\hh\theta_j^h,\hh\theta_l^a\Big)+
d'\wb\Big([\hh\theta_j^h,\hh\theta_l^a],\hh\theta_i^h,\hh\theta_k^a\Big)}\\
&=&-d'\wb\Big(\wh{B(\theta_i^h\wedge\theta_k^a)^h},\hh\theta_j^h,\hh\theta_l^a\Big)
-d'\wb\Big(\wh{B(\theta_j^h\wedge\theta_l^a)^h},\hh\theta_i^h,\hh\theta_k^a\Big)\\
&=&i\G\Big(\wh{B(\theta_j^h\wedge\theta_l^a)},\wh{B(\theta_i^h\wedge\theta_k^a)^h}\Big)
+i\G\Big(\wh{B(\theta_i^h\wedge\theta_k^a)},\wh{B(\theta_j^h\wedge\theta_l^a)^h}\Big)\\
&=&i\G\Big(\wh{B(\theta_j^h\wedge\theta_l^a)^a},\wh{B(\theta_i^h\wedge\theta_k^a)^h}\Big)
+i\G\Big(\wh{B(\theta_i^h\wedge\theta_k^a)^a},\wh{B(\theta_j^h\wedge\theta_l^a)^h}\Big)\\
&=&i\G\Big(\wh{B(\theta_j^h\w\theta_l^a)},
\wh{B(\theta_i^h\w\theta_k^a)}\Big).
\end{eqnarray*}
where we used the orthogonality of two $(1,0)$ vectors. For the
last term, as $\theta_k^a\w\theta_l^a$ belongs to $\bw^+_\C$ we
find
 $\vc[\hh\theta_k^a,\hh\theta_l^a]=-\wh{R(\theta_k^a\wedge\theta_l^a)}
=\displaystyle-\frac{s}{12}\wh{\theta_k^a\wedge\theta_l^a}$.
Finally,
\begin{eqnarray*}
\lefteqn{-d'\wb\Big([\hh\theta_k^a,\hh\theta_l^a],\hh\theta_i^h,\hh\theta_j^h\Big)}\\
&=&-d'\wb\Big(\vc[\hh\theta_k^a,\hh\theta_l^a]^a,\hh\theta_i^h,\hh\theta_j^h\Big)
=
(\frac{s}{6}-1)\frac{s}{12}(\wh{\theta_k^a\wedge\theta_l^a})^a_{ij}.
\end{eqnarray*}

\end{enumerate}
\end{proof}

A detailed analysis of parts~(\ref{1}) and ~(\ref{2}) shows that the twistor construction here
does not provide examples of manifolds $(X,\omega)$ strong KT without $\omega$ being Kähler.
We recover a version of the result of Verbitsky~(\cite[corollary 3.4]{Verb3}).
\begin{coro}
 The form $\wb$ is  $\hess$-closed  if and only if it is $d$-closed
if and only if $s=6$.
\end{coro}

\subsection{Convexity of the $1$-cycle space}
In the case of Einstein manifolds, the trace-free Ricci tensor $B$ vanishes so that only parts~(\ref{1}) and ~(\ref{2}) occur. 
We first study the sign of part~(\ref{1}).
At the point $(m,I)$, a vertical vector $U$ writes $U=aJ+bK$ so that
$U^aU^h=\frac{1}{4}(U+iIU)(U-iIU)=-\frac{1}{2}(a^2+b^2)(Id+iI)$.
The sign of  $\hess\wb\Big(\hh\theta_j^h,U^h,\hh\theta_j^a,U^a\Big) $
is hence the sign opposite to that of  $\displaystyle \frac{s}{6}-1$.

For part~(\ref{2}), a simple computation, using that the anti-symmetric endomorphism associated with $\theta_k^a\wedge\theta_l$
sends $\theta_k$ on $\theta_l$ and $\theta_l$ on $-\theta_k$, leads to $(\wh{\theta_k^a\wedge\theta_l^a})^a_{kl}=\frac{i}{2}(1-u_{kl}^2)>0$,
so that the sign of part~(\ref{2}) is that of $(\frac{s}{6}-1)\frac{s}{12}$.

We find, using~\cite[proposition~1]{barlet} as a general argument to get the pluri-sub-harmonicity from the semi-positivity of the hessian
of $\wb$,
\begin{coro}\label{cor:einstein-convex}
 The hessian $\hess\wb$ of the Hermitian form $\wb$ on the twistor
space $\T=\T(M,g)$ of an anti-self dual Einstein $4$-manifold $(M,g)$ with non-positive constant scalar curvature $s$
is semi-positive. If furthermore $M$ is compact, the volume function on the $1$-cycle space is a continuous pluri-sub-harmonic exhaustion function.
\end{coro}

\section{Twistor spaces of quaternionic K\"ahler manifolds}
\label{sec:3}
In this section,  $(M,g,D)$ will be a quaternionic Kähler $4n$-manifold with constant scalar curvature $s$.
\subsection{Computations of $d\wb$ and $d'\wb$}
We begin with the exterior derivative of the natural Hermitian form.
\begin{prop}
The exterior derivative $d\wb$ of the Hermitian form $\wb$ on the
twistor space $\T$ of the quaternionic Kähler manifold $(M,g,D)$
  vanishes on pure directional (i.e. horizontal or vertical) vectors
except when evaluated on two horizontal vectors and one vertical vector.
More precisely for all $ X,Y$ in $TM$ and $U$ in $\vc$
\begin{eqnarray*}
d\wb(U,\hh X,\hh Y)&=&\G\Big(\wh{(\frac{1}{2}Id-R)(X\w Y)},\J U\Big)\\
&=&\Big(1-\frac{s}{n(n+2)}\Big)\G(UX,Y)
\end{eqnarray*}
where $R$ denotes the curvature of the restriction of the Levi-Civita connection to the rank three sub-bundle~$D$.
\end{prop}
\begin{proof}
Over a open set $\mathcal U$ of $M$ trivialising $\T=\T(M,g,D)\to M$, the lifting of a vector field $X$ on $M$ reads
$$
T\T\ni \hc (X)=X+\wh{\eta(X)}\in \hc\oplus\vc.
$$
The first equality is hence a formal analog of proposition~\ref{prop:dwb}.

We choose a point $(m,I)$ in the twistor space $\T$. Every vertical vector $U$ is of the form $U=aJ+bK$ and hence
$\J U=IU=-bJ+aK$, where $(I,J,K)$ is a direct orthogonal basis of $D$ of vectors of norm $2$. 
Recall now
\begin{lemm} (\cite[lemma 14.40]{Bes87})\label{fonda}
For all vectors  $(X,Y)\in TM$, with $c=\frac{s}{2n(n+2)}$ the following holds
$$
\begin{array}{rcr}
\,[I,R(X\w Y)]&=&-\gamma(X,Y) J+\beta(X,Y) K\\
\,[J,R(X\w Y)]&=&\gamma(X,Y) I-\alpha(X,Y) K\\
\,[K,R(X\w Y)]&=&-\beta(X,Y) I+\alpha(X,Y) J
\end{array}
\textrm{ where } \left\{
\begin{array}{l}
\alpha(X,Y)=c g(IX,Y)\\
\beta(X,Y)=c g(JX,Y)\\
\gamma(X,Y)=c g(KX,Y).\\
\end{array}
\right.
$$
\end{lemm}
Note that
$\alpha(X,Y)=\frac{2}{n+2}r(IX,Y)=\frac{2}{n+2}\frac{s}{4n} g(IX,Y)=c g(IX,Y)$.
This lemma encodes the Einstein property of the metric $g$ and simplifies the previous expression.
In fact, $\wh{R(X,Y)}=\big[I,R(X,Y)\big]=-cg(KX,Y)J+cg(JX,Y)K$. Then,
\begin{eqnarray*}
 \G\Big(\wh{R(X,Y)},\J U\Big)&=&
2 bcg(KX,Y)+2acg(JX,Y))\\
 &=&2c\G\Big((aJ+bK)X,Y\Big)=\frac{s}{n(n+2)}\G(UX,Y).
\end{eqnarray*}
A formula analog to formula~\ref{GU} gives the value $\G(\wh{X\w Y},\J U)=-2g(X,UY)=2g(UX,Y)$,
we conclude $\G\Big(\wh{(\frac{1}{2}Id-R)(X\w Y)},\J
U\Big)=\Big(1-\frac{s}{2n(n+2)}\Big)\G(UX,Y).$
\end{proof}

It then follows by linearity and anti-commutation $UJ=-JU$, that the $(2,1)$-part of the exterior derivative of $\wb$ reads
\begin{prop} For a vertical vector $U\in\vc$, and an orthonormal frame $(\theta_i)$ of $TM$, 
\begin{enumerate}
\item $\displaystyle
d'\wb(U^a,\hh\theta_i^h,\hh\theta_j^h)=-\Big(1-\frac{s}{n(n+2)}\Big)U^a_{ij}$
\item $d'\wb(U^h,\hh\theta_i^h,\hh\theta_j^a)= 0.$
\end{enumerate}
\end{prop}

\subsection{Computation of $\hess\wb$}
The previous paragraph showed that for quaternion K\"ahler manifolds,
the computations follows the lines of the $4$ dimension Einstein case (with vanishing trace-free Ricci operator $B$).
We find
\begin{theo} \label{theo3}
The hessian $\hess\wb$ of the Hermitian form $\wb$ on the twistor
space $\T=\T(M,g,D)$ of a quaternionic Kähler $4n$-manifold $(M,g,D)$ with constant scalar curvature $s$
 is given on pure directions and pure types by the following formulae where
$\hh\theta_i$ are basic horizontal lifts  and $U_i$ vertical vectors,
 \begin{enumerate}
\item $$\hess\wb(U_1^h,U_2^h,U_3^a,U_4^a)=0$$
\item $$\hess\wb(U_1^h,U_2^h,U_3^a,\hh\theta_i^a)=\hess\wb(\hh\theta_i^h,U_3^h,U_1^a,U_2^a)=0$$
\item $$\hess\wb(\hh\theta_i^h,\hh\theta_j^h,U_1^a,U_2^a)=\hess\wb(U_1^h,U_2^h, \hh\theta_i^a,\hh\theta_j^a)=0$$
\item \begin{eqnarray}\label{6}\hess\wb\Big(\hh\theta_i^h,U_1^h,\hh\theta_j^a,U_2^a\Big)=\frac{1}{2}(\frac{s}{n(n+2)}-1)(U_2^a.U_1^h)_{ij}\end{eqnarray}
\item $$\hess\wb(\hh\theta_i^h,\hh\theta_j^h,U^a,\hh\theta_k^a)=
\hess\wb(U^h,\hh\theta_k^h,\hh\theta_i^a,\hh\theta_j^a)=0$$
\item \begin{eqnarray}\label{3}\hess\wb(\hh\theta_i^h,\hh\theta_j^h,\hh\theta_k^a,\hh\theta_l^a)
&=&-i(\frac{s}{n(n+2)}-1)(\wh{R(\theta_k^a\wedge\theta_l^a)})^a_{ij}\\
&=&-i(\frac{s}{n(n+2)}-1)\frac{s}{2n(n+2)}(\wh{\theta_k^a\wedge\theta_l^a})^a_{ij}.\nonumber\end{eqnarray}
\end{enumerate}
\end{theo}

As in the previous setting,
\begin{coro}
 The form $\wb$ is $\hess$-closed  if and only if it is $d$-closed
if and only if $s=n(n+2)$.
\end{coro}
\begin{proof}
 At a point $(m,I)$ for $U=J$, one has $U^a=\displaystyle\frac{J+iK}{2}$
and $U^h=\displaystyle\frac{J-iK}{2}$ so that
$U^aU^h=-\displaystyle\frac{Id+iI}{2}\not=0$.
It follows from the identity ~(\ref{6}) that  $\hess\wb\Big(\hh\theta_i^h,U_1^h,\hh\theta_j^a,U_2^a\Big)$ vanishes
for all $ij$ if and only if $s=n(n+2)$. 
\end{proof}

\subsection{Convexity of the $1$-cycle space}
We study the signs of non-zero terms.
We first study the sign of part~(\ref{3}).
At the point $(m,I)$, a vertical vector $U$ writes $U=aJ+bK$ so that
$U^aU^h=\frac{1}{4}(U+iIU)(U-iIU)=-\frac{1}{2}(a^2+b^2)(Id+iI)$.
Hence, $(U^aU^h)_{jj}=-\frac{1}{2}(a^2+b^2)=-\frac{1}{4}\Vert U\Vert^2=-\frac{1}{2}\Vert U^h\Vert^2$.
The sign of  
\begin{eqnarray*}
\hess\wb\Big(\hh\theta_j^h,U^h,\hh\theta_j^a,U^a\Big)
& =& \frac{1}{2}(\frac{s}{n(n+2)}-1)(U^a.U^h)_{jj}\\&=&-\frac{1}{4}(\frac{s}{n(n+2)}-1)\Vert U^h\Vert^2\\
 &=&-\frac{1}{2}(\frac{s}{n(n+2)}-1)\Vert\theta^h_j\Vert^2\Vert U^h\Vert^2
\end{eqnarray*}
is hence the sign opposite to that of  $\displaystyle \frac{s}{n(n+2)}-1$.

To study the sign of part~(\ref{6}), we have to again make use of the fundamental lemma~\ref{fonda}
\begin{eqnarray*}
\wh{R(\theta_j^a\w\theta_k^a)}&=&\big[I,R(\theta_i^a\w\theta_j^a)\big]=-cg(K\theta_j^a,\theta_k^a)J+cg(J\theta_j^a,\theta_k^a)K\\
&=&\frac{1}{4}\left(-cg\Big(K(\theta_j+iI\theta_j),\theta_k+iI\theta_k\Big)J+cg\Big(J(\theta_j+iI\theta_j),\theta_k+iI\theta_k\Big)K\right)\\
&=&\frac{c}{2}(K_{jk}+iJ_{jk})J-2(J_{jk}-iK_{jk})K\end{eqnarray*}
So, $\displaystyle\wh{R(\theta_j^a\w\theta_k^a)}_{jk}=\frac{ic}{2}(J_{jk}^2+K_{jk}^2)$.
It follows that
\begin{eqnarray*}
\hess\wb(\hh\theta_j^h,\hh\theta_k^h,\hh\theta_j^a,\hh\theta_k^a)&=&
-i(\frac{s}{n(n+2)}-1)\wh{R(\theta_j^a,\theta_k^a)}_{jk}\\
&=&\frac{c}{2}(\frac{s}{n(n+2)}-1)(J_{jk}^2+K_{jk}^2)\\
&=&\frac{1}{2}(\frac{s}{n(n+2)}-1)\frac{s}{2n(n+2)}(J_{jk}^2+K_{jk}^2).
\end{eqnarray*}
So, the sign of part~(\ref{6}) is
that the sign  of $\displaystyle (\frac{s}{n(n+2)}-1)\frac{s}{2n(n+2)}$.

In the case of non-positive scalar curvature, we get the
\begin{coro}\label{pseudo-convex}
 The hessian $\hess\wb$ of the Hermitian form $\wb$ on the twistor
space $\T=\T(M,g,D)$ of a quaternionic Kähler $4n$-manifold $(M,g,D)$ with non-positive constant scalar curvature $s$
is semi-positive. If furthermore $M$ is compact, the volume function on the $1$-cycle space is a continuous pluri-sub-harmonic exhaustion function.
\end{coro}

In the case of vanishing scalar curvature we recover the formula of theorem~\ref{theo2}
\begin{eqnarray*}
\hess\wb\Big(\hh\theta_j^h,U^h,\hh\theta_j^a,U^a\Big)
 &=&\frac{1}{2}\Vert\theta^h_j\Vert^2\Vert U^ h\Vert^2
\end{eqnarray*}
 up to the factor $\frac{1}{2}$ that accounts for the change of the radius of the vertical spheres from $1$ to $\sqrt{2}$.

In the case of positive scalar curvature, our considerations are compatible
with the compacity of the cycle space. Moreover, we find that there is exactly one way of adjusting the volume (i.e.
the scalar curvature) of the base manifold in order to make the volume function constant.

\backmatter

\providecommand{\bysame}{\leavevmode ---\ }
\providecommand{\og}{``}
\providecommand{\fg}{''}
\providecommand{\smfandname}{\&}
\providecommand{\smfedsname}{\'eds.}
\providecommand{\smfedname}{\'ed.}
\providecommand{\smfmastersthesisname}{M\'emoire}
\providecommand{\smfphdthesisname}{Th\`ese}

\end{document}